\newtheorem{theorem}{Theorem}[section]
\newtheorem{lemma}[theorem]{Lemma}
\newtheorem{corollary}[theorem]{Corollary}
\newtheorem{proposition}[theorem]{Proposition}
\theoremstyle{definition}
\newtheorem{definition}[theorem]{Definition}
\newtheorem{example}[theorem]{Example}
\theoremstyle{remark}
\newtheorem{remark}[theorem]{Remark}
\theoremstyle{claim}
\newtheorem{claim}[theorem]{Claim}
\numberwithin{equation}{section}
\def\C{\mathbb C}
\def\R{\mathbb R}
\def\X{\mathbb X}
\def\Z{\mathbb Z}
\def\Y{\mathbb Y}
\def\Z{\mathbb Z}
\def\cal{\mathcal}
\begin{document}

\title[Existence of bounded asymptotic solutions]{
Existence of bounded asymptotic solutions of autonomous differential equations}

\author[V.T Luong]{Vu Trong Luong}
\address{VNU University of Education, Vietnam National University at Hanoi, 144 Xuan Thuy, Cau Giay, Hanoi, Vietnam}
\email{vutrongluong@gmail.com}

\author[W. Barker]{William Barker}
\address{Department of Mathematics and Statistics, University of Arkansas at Little Rock, 2801 S University Ave, Little Rock, AR 72204, USA}
\email{wkbarker@ualr.edu}

\author[N.D. Huy]{Nguyen Duc Huy}
\address{VNU University of Education, Vietnam National University at Hanoi, 144 Xuan Thuy, Cau Giay, Hanoi, Vietnam}
\email{huynd@vnu.edu.vn}

\author[N.V. Minh]{Nguyen Van Minh}
\address{Department of Mathematics and Statistics, University of Arkansas at Little Rock, 2801 S University Ave, Little Rock, AR 72204, USA}
\email{mvnguyen1@ualr.edu}

\thanks{This research is funded by VietNam National Foundation for Science and technology Development (NAFOSTED) under grant number 101.02-2023.17}

\date{\today}
\subjclass[2010]{Primary: 34G10; Secondary: 34C25, 34D05}
\keywords{Autonomous evolution equation on the half line, spectrum of a bounded function on the half line, analytic $C_0$-semigroup, sums of commuting operators, product of commuting semigroups, Katznelson-Tzafriri type theorem, Massera type result,  asymptotic solution}

\begin{abstract} 
We study the existence of bounded asymptotic mild solutions to evolution equations of the form $u'(t)=Au(t)+f(t), t\ge 0$ in a Banach space $\X$, where $A$ generates an (analytic) $C_0$-semigroup and $f$ is bounded. We find spectral conditions on $A$ and $f$ for the existence and uniqueness of asymptotic mild solutions  with the same "profile" as that of $f$. In the resonance case, a sufficient condition of Massera type theorem is found for the existence of bounded solutions with the same profile as $f$.
The obtained results are stated in terms of spectral properties of $A$ and $f$, and they are analogs of classical results of Katznelson-Tzafriri and Massera for the evolution equations on the half line.
Applications from PDE are given.
\end{abstract}

\maketitle

\section{Introduction, Notations and Preliminaries} \label{section 1}
In this paper we consider necessary and sufficient conditions for the existence and uniqueness (in some sense) of bounded solutions with specific structures of spectrum for equations of the form
\begin{equation}\label{eq}
	u'(t)=Au(t)+f(t),\ t\in J,
\end{equation}
where $J=[0,\infty)$, $A$ is the generator of an (analytic) $C_0$-semigroup in a Banach space $\X$, $f$ is a bounded continuous function on the half line $[0,\infty)$ with values in $\X$. As is well known in the qualitative theory of linear ODE (see e.g. \cite{cop}) that if $\X=\R^n$, and $A$ is a matrix with all eigenvalues lying off the imaginary axis, that means the equation has an exponential dichotomy, then there exists a projection $P$ in $\R^n$ such that $AP=PA$ and
\begin{align*}
	\| Pe^{tA}P \| \le Ne^{-\alpha t}, \ t\ge 0 ,\\
	\| Q e^{tA}Q\| \le Ne^{\alpha t}, \ t\le 0 ,
\end{align*}
where $Q:=I-P$, and $\alpha$ is a positive number independent of $t$. Therefore, in this case, the following function $u_0(\cdot)$ defined on $[0,\infty)$
\begin{equation}
	u_0(t)= \int^t_0 Pe^{(t-s)A}Pf(s)ds -\int^\infty_t Qe^{(t-s)A}Qf(s)ds 
\end{equation}
is a solution to Eq.(\ref{eq}) that is bounded on $J=[0,\infty)$. By Perron Theorem, see e.g. \cite[Proposition 2, p. 22]{cop}, the existence of at least a bounded solution on $\R^+$ for each given bounded and continuous $f$ yields the exponential dichotomy of (\ref{eq}), in this finite dimensional case, as well. In general, Eq.(\ref{eq}) may have more bounded solutions on the half line $[0,\infty)$. For example, all solutions of the form
\begin{equation}
	u(t)= Pe^{tA}P x+u_0, \ t\ge 0,
\end{equation}
are solutions of Eq.(\ref{eq}) that are bounded on $J=[0,\infty)$. It is easy to see that these are all solutions that are bounded on $[0,\infty)$. We note that actually, Eq(\ref{eq}) in this case has only one bounded solution within functions decaying to zero as $t\to +\infty$. We also note that as is well known (see \cite{cop}) for equations defined on the whole real line the situation is much more simple. In fact, $u_0(\cdot )$ is the unique solution that is bounded on the whole real line if $f(\cdot )$ is a given bounded on the whole real line. In the infinite dimensional case, additional conditions are needed for the equivalence between the exponential dichotomy and the existence of at least a bounded solutions on the half line. For more details, see e.g. \cite{massch}, \cite{minrabsch}.

\bigskip
It is also interesting to note that the bounded solution problem for Eq.(\ref{eq}) on the whole real line $J=\R$, where  $f$ is defined and bounded on the whole real line $\R$, turns out to be "simpler" as the necessary and sufficient conditions can be stated in a shorter manner. Namely, as is well known, in this case the Perron Theorem says that Eq. (\ref{eq}) has an exponential dichotomy if and only if for each $f$ defined and bounded on the whole real line $J=\R$ there exists a unique bounded solution on the real line $\R$. In the infinite dimensional case, the solutions may be understood in the sense of mild solutions.

\bigskip
Bounded Solution Problem of Eq.(\ref{eq}) when $J=\R$ has been considered in a broader context when the equation may have no exponential dichotomy, that is, the operator $A$ may allow $\sigma (A)\cap i\R \not= \emptyset$ in many works.  For more information see e.g. \cite{furnaimin,hinnaiminshi,min,murnaimin,naimin,naiminshi,naiminmiyshi,shinai}. Basically, the results can be summarized as follows: the concept of spectrum $\sigma (f)$ of a function $f$ on $\R$ is used that generalizes the concept of frequency. For autonomous equations (i.e. $A$ is independent of time $t$), if $A$ generates an analytic semigroup, then the non-resonance condition $\sigma (A)\cap i\R \not= \emptyset$ guarantees the existence of a classical solution $u_f$ with $\sigma (u_f) \subset \sigma (f)$. In the general case without analyticity of the semigroup $T(t)$ generated by $A$, a non-resonance condition is imposed on the operator $T(1)$ (see \cite{naimin}). In the resonance case a well known result (Massera Theorem) in the theory of ODE says that for Eq.(\ref{eq}) with $u(t)\in \R^n$ and $f(\cdot )$ being $p$-periodic it has a $p$-periodic solution if and only if it has a bounded solution on $\R^+$. In the general case, numerous extensions of this results are obtained for periodic solutions in  \cite{furnaimin,shinai, naiminmiyshi}, for almost periodic solutions in \cite{naiminshi}.

\bigskip
For Eq.(\ref{eq}) on the half line $\R^+$, the situation is much more complicated because we have very little information on the function $f$ on the half line. And the spectrum of a function on the half line does not capture much of the behavior of the function as in the whole real line case.
It is the purpose of this paper to extend some of the above mentioned results to the half line case when the eigennvalues of $A$ (or spectrum $\sigma (A)$, if $\X$ is generally a Banach space) may intersect the imaginary axis. This generalization will be based on the spectral theory of functions on the half line $[0,\infty)$ that has been developed recently in a series of papers by \cite{min,luomin,luoloiminmat} and a recent paper \cite{dinjanminngu}. We will use the method of sums of commuting operators as it is used in \cite{murnaimin} to prove the existence and uniqueness of asymptotic mild solutions. Our first result is stated in Theorems \ref{the main1} saying that in case $A$ generates an analytic $C_0$-semigroup, $f$ has precompact range and if $\sigma (A)\cap isp(f) =\emptyset$, where $sp (f)$ is the spectrum of $f$, then there exists an asymptotic mild solution to Eq.(\ref{eq}) that is unique in $BUC_C(\R^+,\X)$ within a function converging to zero. This result is a Katznelson-Tzafriri type theorem (see e.g. \cite{vu} for related results and topics).
We also consider the resonance case when $\sigma (A)\cap isp(f) \not=\emptyset$. In this case, the second main result of this paper of Massera type theorem (Theorem \ref{the main2}) says that under the assumption that $f\in BUC_C(\R^+,\X)$ and $ sp(f)\cap \overline{\sigma_i (A)\backslash sp(f)}=\emptyset$ and one of the sets $sp(f)$, $\overline{\sigma_i (A)\backslash sp(f)}$ is compact, where $\sigma_i(A)$ is the part of $\sigma (A)$ on the imaginery axis, then, then there exists an asymptotic mild solution with the same spectral profile as $f$ provided that there exists an asymptotic mild solution of Eq.(\ref{eq}) in $BUC_C(\R^+,\X)$. The obtained results complement those in \cite{luoloiminmat} and \cite{naiminshi}.

\section{ Notations and Preliminaries}
\subsection{Notations}
In this paper we will denote by $\R$, $\C$ the fields of real and complex numbers, respectively. By $\mathbb{X}$ we often denote a Banach space over $\mathbb{C}$ with norm $\|\cdot\|$. The space $\mathcal{L}(\mathbb{X})$ of all bounded linear operators on $\mathbb{X}$ with norm $\| \cdot \|$, by abuse of notation for our convenience if this does not cause any confusion. For a linear operator $A$ in $\mathbb{X}$ we denote by $D(A)$ its domain, and $\sigma (A)$ and $\rho(A)$ its spectrum and resolvent set, respectively. If $\mu\in \rho(A)$, then $R(\mu,A):=(\mu-A)^{-1}$. We will denote a $C_0$-semigroup of linear operators in a Banach space $\X$ usually as $(T(t))_{t\ge 0}$, but sometimes as $T(t)$ for simplicity, if this does not cause any confusion.

\subsection{A spectral theory of bounded functions on the half line $[0,\infty)$}
Let $\cal D$ denote the differentiation operator $d/dt$ in $BUC  (\R^+,\X)$ with domain
$$
D(\cal D) =\{ f\in BUC (\R^+,\X): \exists f', \ f'\in BUC  (\R^+,\X)\} .
$$
As is well known, the translation semigroup $(S(t)_{t\ge 0})$ in $BUC  (\R^+,\X)$, defined as $S(t)f (\cdot ):=f(t+\cdot )$ for each $f\in BUC(\R^+,\X)$, is strongly continuous in $BUC  (\R^+,\X)$ with ${\cal D}$ as its infinitesimal generator. 
\subsubsection{Operator $\tilde{\cal D}$}
Throughout the paper we will use the following notation
\begin{eqnarray*}
	C_{0}(\R^+,\X) &:=&\{ f\in BUC   (\R^+,\X): \ \lim_{t\to \infty} f(t)  =0\} .
\end{eqnarray*}
$C_{0  }(\R^+,\X)$ is a closed subspace $BUC   (\R^+,\X)$, and is invariant under the translation semigroup $(S(t)_{t\ge 0})$.
In the space $BUC   (\R^+,\X)$ we introduce the following relation $R$:
\begin{equation}
	f \ R \ g \ \mbox{if and only if} \ \ f -g \in C_{0  }(\R^+,\X) .
\end{equation}
This is an equivalence relation and the quotient space $\Y:= BUC   (\R^+,\X)/ R$ is a Banach space. We will also denote the norm in this quotient space $\Y$ by $\| \cdot \| $ if it does not cause any confusion.

\bigskip
Similarly we define the space $\Y^C:= BUC_C(\R^+,\X)/C_{0  }(\R^+,\X) ,$ where $$
BUC_C(\R^+,\X):= \{ f\in BUC_C(\R^+,\X): \ \mbox{the range of $f$ is precompact}\}.$$
 Note that $BUC_C(\R^+,\X)$ is a closed subspace of $BUC (\R^+,\X)$ that is left invariant by the semigroup of translations $(S(t)_{t\ge 0})$ in $BUC  (\R^+,\X)$.

\medskip
The class containing $f\in BUC   (\R^+,\X)$ will be denoted by $\tilde{f}$. Define $\tilde{\cal D}$ in $ \Y=BUC   (\R^+,\X)/ R$ as follows:
\begin{eqnarray}
	D( \tilde{\cal D}) &:=&\{ \tilde{f} \in \Y : \exists u\in \tilde{f}, u\in D(\cal D)\} .
\end{eqnarray}
If $f\in D( \tilde{\cal D}) $, we set
\begin{equation}
	\tilde{D}\tilde{f} := \widetilde {\cal Du} 
\end{equation}
for some $u\in \tilde f$. The following lemma will show that this $\tilde{D}$ is well defined as an operator in $\Y$. Similarly, we can define $\tilde{\cal D}$ on $\Y^C$.
\begin{lemma}
	The following assertions are valid:
	\begin{enumerate}
		\item With the above notations, $\tilde{\cal D}$ is a well defined single valued linear operator in $\Y$ and $\Y^C$;
		\item The induced semigroup $(\bar S(t)_{t\in\R^+})$ is extendable to a strongly continuous group $(\bar S(t)_{t\in\R})$ in $\Y$ with $\tilde{\cal D}$ as its infinitesimal generator.
	\end{enumerate}
	 \end{lemma}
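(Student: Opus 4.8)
The plan is to treat the two assertions in turn, using throughout that $C_0(\R^+,\X)$ is a closed subspace of $BUC(\R^+,\X)$ invariant under the translation semigroup $(S(t))_{t\ge0}$ (and that the same holds for $BUC_C(\R^+,\X)$, with $C_0(\R^+,\X)\subset BUC_C(\R^+,\X)$). For assertion (1), $\tilde{\cal D}$ is single valued precisely when $\widetilde{\cal D u}=\widetilde{\cal D v}$ whenever $u,v\in\tilde f$ both lie in $D(\cal D)$; setting $w:=u-v\in C_0(\R^+,\X)\cap D(\cal D)$ this reduces to the claim that
\begin{equation*}
w\in C_0(\R^+,\X)\ \text{and}\ w'\in BUC(\R^+,\X)\ \Longrightarrow\ w'\in C_0(\R^+,\X).
\end{equation*}
I would prove this by contradiction: if $w'\not\to0$, pick $\varepsilon>0$ and $t_n\to\infty$ with $\|w'(t_n)\|\ge\varepsilon$, and functionals $\phi_n\in\X^*$, $\|\phi_n\|=1$, $\phi_n(w'(t_n))=\|w'(t_n)\|$; uniform continuity of $w'$ provides $\delta>0$, independent of $n$, with $\phi_n(w'(t))>\varepsilon/2$ for $|t-t_n|\le\delta$ (and $[t_n-\delta,t_n+\delta]\subset\R^+$ once $n$ is large), so that $\phi_n\big(w(t_n+\delta)-w(t_n-\delta)\big)=\int_{t_n-\delta}^{t_n+\delta}\phi_n(w'(t))\,dt\ge\varepsilon\delta$, contradicting $w\to0$. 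Linearity is immediate, and the same argument applies verbatim on $\Y^C$.

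For assertion (2), I would first note that, $C_0(\R^+,\X)$ being closed and $S(t)$-invariant, the formula $\bar S(t)\tilde f:=\widetilde{S(t)f}$ defines a $C_0$-semigroup of contractions on $\Y$, with the canonical quotient map $q\colon BUC(\R^+,\X)\to\Y$ intertwining $S(t)$ and $\bar S(t)$. To extend to negative times, for $s>0$ I set $\bar S(-s)\tilde f:=\widetilde{g_s}$, where $g_s(\tau)=f(\tau-s)$ for $\tau\ge s$ and $g_s(\tau)=f(0)$ for $0\le\tau<s$. The decisive point is that any two admissible choices of extension, or of representative of $\tilde f$, differ only on the compact interval $[0,s]$ and hence by an element of $C_0(\R^+,\X)$; thus $\bar S(-s)$ is well defined, and since $\|g_s\|_{BUC}=\|f\|_\infty$ we get $\|\bar S(-s)\|\le1$. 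Checking that all the relevant ``padding'' discrepancies are compactly supported then yields $\bar S(s)\bar S(-s)=\bar S(-s)\bar S(s)=I$ and the remaining group laws, so that $(\bar S(t))_{t\in\R}$ is a group of surjective contractions, i.e.\ of isometries, on $\Y$.

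Strong continuity at $0$ from the right is inherited from $(S(t))_{t\ge0}$, while from the left it follows from the identity $\bar S(-s)\tilde f-\tilde f=\bar S(-s)\big(\tilde f-\bar S(s)\tilde f\big)$ together with $\|\bar S(-s)\|\le1$. To identify the generator $B$ of $(\bar S(t))_{t\in\R}$ with $\tilde{\cal D}$, I would apply the bounded operator $q$ to the Laplace representation $R(\lambda,\cal D)=\int_0^\infty e^{-\lambda t}S(t)\,dt$ valid for large $\lambda$, obtaining $q\,R(\lambda,\cal D)=R(\lambda,B)\,q$ and hence
\begin{equation*}
D(B)=R(\lambda,B)\Y=R(\lambda,B)\,q\big(BUC(\R^+,\X)\big)=q\big(R(\lambda,\cal D)BUC(\R^+,\X)\big)=q\big(D(\cal D)\big)=D(\tilde{\cal D}),
\end{equation*}
together with $B\,q u=q\,\cal D u=\tilde{\cal D}\tilde u$ on $D(\cal D)$; thus $B=\tilde{\cal D}$.

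The conceptual crux, and the step I expect to demand the most care, is the extension to negative times: on $BUC(\R^+,\X)$ the translations form only a semigroup, and it is exactly because the obstruction to translating backwards is confined to the compact initial interval $[0,s]$, which is annihilated in the quotient by $C_0(\R^+,\X)$, that $(\bar S(t))$ becomes a group. Arranging the well-definedness, the group laws, and the bound $\|\bar S(-s)\|\le1$ so that they all rest cleanly on the implication ``compactly supported $\Rightarrow$ lies in $C_0(\R^+,\X)$'' is the delicate bookkeeping; by comparison the derivative lemma in (1), though requiring a genuine estimate, is routine.
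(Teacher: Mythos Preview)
Your argument is sound. The paper itself does not prove this lemma at all: its proof reads in full ``For the proof see \cite{arebat,luomin}.'' Your direct treatment---the derivative-decays lemma for (1) and the explicit backward-padding construction for (2), followed by the resolvent intertwining $q\,R(\lambda,{\cal D})=R(\lambda,B)\,q$ to identify the generator---is exactly the standard route taken in those references, so in that sense you have reconstructed what the paper merely cites.

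One small imprecision worth correcting. In checking that $\bar S(-s)$ is well defined you assert that two choices ``differ only on the compact interval $[0,s]$.'' This is true for two choices of \emph{padding} of a single $f$, but not for two \emph{representatives} $f_1,f_2$ of $\tilde f$: on $[s,\infty)$ the difference $g_s^{(1)}-g_s^{(2)}$ equals $(f_1-f_2)(\cdot-s)$, which is generally nonzero. It is, however, a translate of an element of $C_0(\R^+,\X)$ and hence still lies in $C_0(\R^+,\X)$, so the conclusion is unaffected. Likewise, the inequality $\|\bar S(-s)\tilde f\|_{\Y}\le\|\tilde f\|_{\Y}$ follows from $\|g_s\|_\infty=\|f\|_\infty$ only after taking the infimum over representatives $f$; say so explicitly. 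These are cosmetic fixes; the proof goes through.
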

\begin{proof}
	For the proof see \cite{arebat,luomin}.
\end{proof}

For each given $f\in BUC  (\R^+,\X)$ consider the following complex function $\hat f(\lambda )$ in $\lambda$ taking values in $\Y$ defined as
\begin{equation}
	\hat f (\lambda ) := (\lambda -\tilde{\cal D})^{-1} \tilde{f} .
\end{equation}

\begin{definition}\label{def spectrum}
	The set of all points $\xi_0 \in \R$ such that $\hat f (\lambda )$ has no analytic extension to any neighborhood of $i\xi_0$ is defined to be the spectrum of $f$, denoted by $sp  (f)$.
\end{definition}
\subsection{Arveson spectrum of a bounded function on the half line}
The concept of spectrum of a bounded function $g$ on the half line in Definition \ref{def spectrum} is actually the Arveson spectrum of $g$ with respect to the group of isometries $(\overline{S}(t)_{t\in\R})$ in $\Y$ (see e.g. \cite[p.365]{arebat2}) that is defined as follows: 
\begin{align}
	sp^{\bar S} (g)&:=\{\xi \in \R: \forall \varepsilon >0 \ \exists f\in L^1(\R) \ \mbox{such that} \nonumber \\
	&supp \overline{\cal F} f \subset (\xi-\varepsilon ,\xi +\varepsilon ) \ \mbox{and} \ f(\bar S_g)\not=0  \},
\end{align}
where $(\bar S_g(t)_{t\in\R})$ is the restriction of the group $(\overline{S}(t)_{t\in\R})$ to the invariant closed subspace that is the closure of the linear subspace ${\cal M}_g$ of $\Y$ spanned over all translations $\bar S(t)g, t\in \R$. We also recall the following definitions (see also\cite[p.365]{arebat2})
$$
f(\bar S_g)h:= \int^{+\infty}_{-\infty} f(t)\bar S(t)hdt , \ (h\in \cal M_g),
$$
and 
$$
\bar {\cal F}f(s):=\int^{+\infty}_{-\infty} e^{st}f(t)dt
$$
is the Fourier transform of $f$. Subsequently, the spectrum of $f\in BUC(\R^+,\X)$ can be determined by the spectrum of the operator $\tilde{\cal D}_f$ as follows (see e.g. \cite[p.366]{arebat2}):
\begin{lemma}\label{lem 2.3}
Let $f\in BUC(\R^+,\X)$. Then, the following are valid:
\begin{enumerate}
	\item 
	\begin{equation}
		sp(f) =\sigma (\tilde{\cal D}|_{\cal M_f}).
	\end{equation}
	\item 
	\begin{align}
		sp(f) =\{ & \xi \in \R: \forall \varepsilon >0 \ \exists \psi \in L^1(\R) \ \mbox{such that} \nonumber   \\
		            & supp \bar {\cal F} \psi \subset (\xi -\varepsilon ,\xi +\varepsilon), \psi * u=0 \} .
	\end{align}
	
\end{enumerate}

\end{lemma}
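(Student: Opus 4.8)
The plan is to derive both assertions from the identification---recorded in the discussion preceding the statement---of $sp(f)$ with the Arveson spectrum $sp^{\bar S}(f)$ of $\tilde f$ relative to the isometric group $(\bar S(t))_{t\in\R}$ on $\Y$. Since $\sigma(\tilde{\cal D})\subset i\R$ while $sp(f)\subset\R$, I read the asserted equality $sp(f)=\sigma(\tilde{\cal D}|_{\cal M_f})$ under the standing identification $\xi\leftrightarrow i\xi$. Two structural facts are isolated first: the cyclic subspace $\cal M_f=\overline{\mathrm{span}}\{\bar S(t)\tilde f:t\in\R\}$ is invariant under $R(\lambda,\tilde{\cal D})$ for $\lambda\in\rho(\tilde{\cal D})$ (because the resolvent is a Laplace integral of the $\bar S(t)$), the operator $\tilde{\cal D}|_{\cal M_f}$ generates the restricted group, and $R(\lambda,\tilde{\cal D})|_{\cal M_f}=R(\lambda,\tilde{\cal D}|_{\cal M_f})$ on $\rho(\tilde{\cal D})\cap\rho(\tilde{\cal D}|_{\cal M_f})$.

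For assertion (1) one inclusion is immediate. If $i\xi_0\in\rho(\tilde{\cal D}|_{\cal M_f})$ then $\lambda\mapsto R(\lambda,\tilde{\cal D}|_{\cal M_f})$ is operator-analytic near $i\xi_0$; as $\tilde f\in\cal M_f$, the local resolvent $\hat f(\lambda)=R(\lambda,\tilde{\cal D})\tilde f$ agrees there with $R(\lambda,\tilde{\cal D}|_{\cal M_f})\tilde f$ and hence extends analytically across $i\xi_0$, giving $\xi_0\notin sp(f)$. Thus $i\,sp(f)\subseteq\sigma(\tilde{\cal D}|_{\cal M_f})$.

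The reverse inclusion is the core of the proof. Assuming $\xi_0\notin sp(f)$, the extension of $\hat f$ to a ball $B(i\xi_0,r)$ together with $R(\lambda,\tilde{\cal D})\bar S(t)\tilde f=\bar S(t)\hat f(\lambda)$ shows, by linearity, that $R(\lambda,\tilde{\cal D})g$ extends analytically across $i\xi_0$ for every $g$ in the dense subspace $\mathrm{span}\{\bar S(t)\tilde f\}$. I would then upgrade this \emph{pointwise, dense} analyticity to \emph{operator-norm} analyticity of $R(\lambda,\tilde{\cal D}|_{\cal M_f})$ near $i\xi_0$; a uniform bound for $\|R(\lambda,\tilde{\cal D}|_{\cal M_f})\|$ on a punctured neighborhood of $i\xi_0$ then forces $i\xi_0\in\rho(\tilde{\cal D}|_{\cal M_f})$ by the standard boundary-point criterion for the resolvent set. \textbf{This uniform-boundedness step is the main obstacle}: although each orbit vector $\bar S(t)\tilde f$ has the same norm and the same analytic-extension bound (by isometry), passing to linear combinations loses control of the norm, so a plain Banach--Steinhaus argument does not suffice. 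I would resolve it either by a Vitali/normal-families estimate for the operator-valued extensions, or---more economically---by invoking Arveson's theorem that the Arveson spectrum of a vector coincides with the spectrum of the generator of the restricted group on its cyclic subspace, see \cite[pp.~365--366]{arebat2} and \cite{arebat,luomin}.

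For assertion (2) I would unwind the Arveson description into convolution form. Writing $\psi(\bar S_f)\tilde f=\int_{-\infty}^{\infty}\psi(t)\bar S(t)\tilde f\,dt$ and using that $\bar S(t)$ acts by translation on representatives, this integral is the class in $\Y$ of a convolution $\psi*u$ of $\psi$ with a representative $u\in\tilde f$ (with the reflection absorbed by the convention $\bar{\cal F}\psi(s)=\int e^{st}\psi(t)\,dt$). Cyclicity then eliminates the test vector: since $\psi(\bar S_f)$ commutes with every $\bar S(s)$ and $\cal M_f$ is the closed span of $\{\bar S(s)\tilde f\}$, the operator $\psi(\bar S_f)$ vanishes on $\cal M_f$ if and only if $\psi(\bar S_f)\tilde f=0$, that is, if and only if $\psi*u=0$ in $\Y$. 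Substituting this equivalence into the Arveson definition of $sp^{\bar S}(f)$ reproduces the convolution description in assertion (2), which by assertion (1) equals $\sigma(\tilde{\cal D}|_{\cal M_f})$.
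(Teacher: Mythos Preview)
The paper gives no proof of this lemma; it simply records the statement with a reference to \cite[p.~366]{arebat2}. Your proposal is therefore more ambitious: you outline a direct argument for (i), correctly isolate the nontrivial direction (upgrading local analyticity of $\hat f$ near $i\xi_0$ to $i\xi_0\in\rho(\tilde{\cal D}|_{{\cal M}_f})$), and rightly flag the uniform-boundedness obstruction as genuine---for general closed operators the local spectrum of a cyclic vector can be strictly smaller than the spectrum of the restriction to its cyclic subspace, and it is precisely the bounded-group structure of $(\bar S(t))_{t\in\R}$ that forces equality here. Your fallback, invoking Arveson's identification of the Arveson spectrum with $\sigma(\tilde{\cal D}|_{{\cal M}_f})$, is exactly the paper's citation, so at the decisive point the two approaches coincide. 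Your treatment of (ii) via cyclicity (reducing $\psi(\bar S_f)=0$ on ${\cal M}_f$ to $\psi(\bar S_f)\tilde f=0$, hence to the vanishing of the convolution class in $\Y$) is correct and more explicit than anything the paper supplies; note only that the displayed condition in (ii) should read $\psi*u\not=0$ in $\Y$, consistent with the Arveson definition quoted just above the lemma---your argument in fact delivers this corrected form.
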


\begin{definition}
	Let $p$ be a given real number in $[0,2\pi)$. A function $g\in BUC(\R^+,\X)$ is said to be an asymptotic Bloch $1$-periodic function of type $p$ if
	\begin{equation*}
		\lim_{t\to\infty} (g(t+1)-e^{ip}g(t))=0.
	\end{equation*}
	If $p=0$, an asymptotic Bloch $1$-periodic function $g$ of type $p$ will be called an asymptotic $1$-periodic function. When $p=\pi$ we call the function asymptotic anti-periodic.
\end{definition}

In \cite{luoloiminmat} a function $g\in BUC(\R^+,\X)$ is an asymptotic Bloch $1$-periodic function of type $p$ if and only if $\sigma (g):=\overline{e^{i sp(f)}}\subset \{ e^{ip}\}$. In the following some characterizations of the asymptotic behavior of a function $g$ are given in terms of the spectral properties of $f$
\begin{theorem}\label{the 3.8}
	Let $g\in BUC  (\R^+,\X)$. Then,
	\begin{enumerate}
		\item
		If $\xi_0$ is an isolated point in $sp (g)$, then $i\xi_0$ is either removable or a simple pole of ${\hat g(\lambda)}$;
		\item If $sp  (g)=\emptyset$, then $g\in C_{0 }(\R^+,\X)$;
		\item $sp  (g)$ is a closed subset of $\R$;
		\item $g$ is an asymptotic Bloch $1$-periodic function of type $p$ if and only if $sp (g) \subset  p+2\pi \Z$.
	\end{enumerate}
\end{theorem}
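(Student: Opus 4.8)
The plan is to reduce every assertion to the spectral theory of the generator $\tilde{\cal D}$ of the translation group on $\Y$, using Lemma \ref{lem 2.3}(1), which identifies $sp(g)$ with $\sigma(\tilde{\cal D}|_{\cal M_g})$. The decisive structural fact I would record first is that $(\bar S(t))_{t\in\R}$ is a group of \emph{isometries} of $\Y$: the quotient norm is $\|\tilde f\|_\Y=\limsup_{s\to\infty}\|f(s)\|$, which is translation invariant. Consequently $\sigma(\tilde{\cal D}|_{\cal M_g})\subset i\R$ and $\|R(\lambda,\tilde{\cal D})\|\le 1/|\mathrm{Re}\,\lambda|$ for $\mathrm{Re}\,\lambda\neq0$. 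With this in hand, (iii) is immediate, since the spectrum of a closed operator is closed (equivalently, the set of $\xi_0$ admitting an analytic extension of $\hat g$ near $i\xi_0$ is open). For (ii), if $sp(g)=\emptyset$ then $\sigma(\tilde{\cal D}|_{\cal M_g})=\emptyset$; but the Arveson spectrum of a nonzero bounded representation of $\R$ is always nonempty (see \cite{arebat2}), so $\cal M_g=\{0\}$, i.e. $\tilde g=0$, which is exactly $g\in C_0(\R^+,\X)$.

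The heart of the matter is (i). Let $i\xi_0$ be isolated in $\sigma(\tilde{\cal D}|_{\cal M_g})$ and let $P=\frac{1}{2\pi i}\oint_\Gamma R(\lambda,\tilde{\cal D}|_{\cal M_g})\,d\lambda$ be the Riesz projection for a small circle $\Gamma$ about $i\xi_0$ enclosing no other spectrum. Because $\{i\xi_0\}$ is a bounded isolated spectral set, the restriction $A_0:=\tilde{\cal D}|_{P\cal M_g}$ is a \emph{bounded} operator with $\sigma(A_0)=\{i\xi_0\}$, and $B:=A_0-i\xi_0$ generates the bounded group $e^{tB}=e^{-i\xi_0 t}\bar S(t)|_{P\cal M_g}$ with $\sigma(B)=\{0\}$. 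Here I would invoke Gelfand's theorem: a bounded operator $B$ with $\sigma(B)=\{0\}$ and $\sup_{t\in\R}\|e^{tB}\|<\infty$ must vanish (the entire function $\langle e^{zB}x,x^*\rangle$ is of minimal exponential type and bounded on $\R$, hence constant by Phragm\'en--Lindel\"of and Liouville). Thus $A_0=i\xi_0 I$, so near $i\xi_0$ the Laurent expansion is $R(\lambda,\tilde{\cal D}|_{\cal M_g})=P/(\lambda-i\xi_0)+\text{(holomorphic)}$; since $\tilde g\in\cal M_g$ and $\cal M_g$ is invariant, $\hat g$ is computed inside $\cal M_g$ and $\hat g(\lambda)=P\tilde g/(\lambda-i\xi_0)+\text{(holomorphic)}$, so $i\xi_0$ is removable ($P\tilde g=0$) or a simple pole ($P\tilde g\neq0$).

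For (iv) I would translate asymptotic Bloch periodicity into an eigen-relation in $\Y$: $\lim_{t\to\infty}(g(t+1)-e^{ip}g(t))=0$ is equivalent to $\bar S(1)\tilde g=e^{ip}\tilde g$. If this holds, then $\bar S(1)$ commutes with every $\bar S(t)$, so $\bar S(1)=e^{ip}I$ on the dense span of $\{\bar S(t)\tilde g\}$, hence on all of $\cal M_g$; the spectral inclusion $e^{i\,sp(g)}\subset\sigma(\bar S(1)|_{\cal M_g})=\{e^{ip}\}$ then forces $sp(g)\subset p+2\pi\Z$. Conversely, if $sp(g)\subset p+2\pi\Z$, the weak spectral mapping theorem for bounded groups gives $\sigma(\bar S(1)|_{\cal M_g})=\overline{e^{i\,sp(g)}}=\{e^{ip}\}$; since $\bar S(1)|_{\cal M_g}$ is an isometry with singleton spectrum on the unit circle, the discrete Gelfand theorem yields $\bar S(1)|_{\cal M_g}=e^{ip}I$, in particular $\bar S(1)\tilde g=e^{ip}\tilde g$, which is the desired asymptotic Bloch periodicity (the case $\cal M_g=\{0\}$ being trivial).

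The main obstacle is (i): for a generic closed operator an isolated spectral point can be a pole of any order or an essential singularity, and it is precisely the isometry of the translation group --- through Gelfand's theorem applied to the bounded part $A_0-i\xi_0$ --- that excludes everything beyond a simple pole. The same rigidity, in its discrete form together with the weak spectral mapping theorem, is what makes the converse in (iv) work when $sp(g)$ is infinite, where a direct eigen-decomposition of $\tilde g$ would not obviously converge.
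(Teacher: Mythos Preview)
Your proposal is correct and, in spirit, follows the same route the paper intends: everything is pulled back to the generator $\tilde{\cal D}$ of the isometric translation group on $\Y$ via the identification $i\,sp(g)=\sigma(\tilde{\cal D}|_{\cal M_g})$, and the Weak Spectral Mapping Theorem together with Gelfand's theorem do the work in (i) and (iv). The only difference is one of completeness rather than strategy: the paper's own proof simply cites \cite{luomin} for (i)--(iii) and invokes the equivalence ``asymptotic Bloch $\Leftrightarrow \sigma(g)\subset\{e^{ip}\}$'' from \cite{luoloiminmat} for (iv), whereas you supply the actual arguments behind those citations---in particular the Gelfand-type rigidity (``power-bounded with singleton unimodular spectrum $\Rightarrow$ scalar'') that forces both the simple-pole conclusion in (i) and the converse direction in (iv). What your self-contained treatment buys is that the reader sees exactly where the isometry of $(\bar S(t))_{t\in\R}$ enters, which is otherwise hidden in the references; what the paper's approach buys is brevity. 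One small cosmetic point: Lemma~\ref{lem 2.3} as stated in the paper writes $sp(f)=\sigma(\tilde{\cal D}|_{\cal M_f})$ without the factor $i$, while the rest of the paper (and your argument) uses $i\,sp(f)=\sigma(\tilde{\cal D}|_{\cal M_f})$; you handle this correctly, but it would be worth flagging the normalization when you invoke the lemma.
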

\begin{proof}
	For the proofs of (i), (ii) and (iii) see \cite{luomin}. For (iv), note that as shown in \cite{luoloiminmat}, 
	$f$ is an asymptotic Bloch $1$-periodic function of type $p$ if and only if $\sigma (f) \subset \{ e^{ip}\}$. By the Weak Spectral Mapping Theorem, this means
	$$
	\overline{e^{isp(g)}} =\sigma (g) \subset \{e^{ip}\}.
	$$
This is equivalent to $sp (f) \subset p+2\pi \Z$.
\end{proof}

\subsection{Sums of commuting operators}
We recall the notion of two commuting operators which will be used in the sequel. 

\begin{definition}\label{def 2.2} 
	Let $A$ and $B$ be operators on a Banach space $G$ with non-empty resolvent set. We say that {\it A and B commute} if one of the following equivalent conditions hold: 
	\begin{enumerate} 
		\item \  $R(\lambda ,A)R(\mu ,B)=R(\mu ,B)R(\lambda ,A)$ for some (all) $\lambda \in \rho (A), \mu \in \rho (B) $ , 
		\item \  $ x \in D(A)$ implies $R(\mu ,B)x \in D(A)$ and $AR(\mu ,B)x= 
		R(\mu ,B)Ax$ for some (all) $\mu \in \rho (B)$. 
	\end{enumerate} 
\end{definition} 

For $ \theta \in (0,\pi ), R>0$ we denote $\Sigma (\theta ,R)=\{ z \in {\bf C}: | z| \ge R , |arg z |\le \theta \}$. 

\begin{definition}\label{def 2.3}
	\rm 
	Let $A$ and $B$ be commuting operators. Then 
	\begin{enumerate}
		\item \  $A$ is said to be of class  $ \Sigma (\theta +\pi /2 ,R) $ if  
		there are positive constants $\theta , R$ such that $ 0< \theta < \pi /2$, and  
		\begin{equation}\label{8} 
			\Sigma (\theta +\pi /2 ,R) \subset \rho (A)  \mbox{and}  
			\sup_{\lambda \in \Sigma (\theta +\pi /2 ,R)}\| \lambda R(\lambda ,A)\| < \infty , 
		\end{equation} 
		\item \  $A$ and $B$ are said to satisfy {\it condition P} if there are positive constants  
		$ \theta , \theta ', R, \theta ' <\theta $ such that  $A$ and $B$  are of class 
		$ \Sigma (\theta +\pi /2 ,R),\Sigma (\pi /2 -\theta ' ,R)$, respectively. 
		
	\end{enumerate} 
\end{definition} 
If  $A$  and $B$ are  commuting  operators, $A+B$ is  
defined by $(A+B)x=Ax+Bx$ with domain $D(A+B)=D(A)\cap D(B)$. 

In this paper we will use the following norm, defined by $A$ on the  
space $\X$, $\| x\|_{{\cal T}_{A}} := \| R(\lambda ,A)x\|  
$, where $\lambda \in \rho (A)$. It is seen that different  
$\lambda \in \rho (A)$ yields equivalent norms. We say that an operator 
$C$ on $\X$  is $A$-closed if its graph is closed with respect to the  
topology induced by ${\cal T}_A$ on the product ${\X}\times {\X}$. 
It is easily seen that $C$ is $A$-closable if  
$x_n\to 0, x_n\in D(C), Cx_n\to y $  
with respect to ${\cal T}_A$ in ${\X}$ implies $y=0$. In this case, 
$A$-closure of $C$ is denoted by $\overline{C}^A$.

\begin{theorem}\label{the 2.2} 
	Assume that  $A$ and $B$  commute. Then the following assertions hold: 
	\begin{enumerate} 
		\item \  If one of the operators is bounded, then 
		\begin{equation}\label{9} 
			\sigma (A+B) \subset \sigma (A) + \sigma (B). 
		\end{equation}
		\item \  If $A$ and $B$ satisfy condition P, then $A+B$ 
		is $A$-closable, and  
		\begin{equation}\label{10} 
			\sigma (\overline{(A+B)}^A)\subset \sigma (A) +\sigma (B). 
		\end{equation} 
		In particular, if $D(A)$ is dense in {\bf X}, then 
		$\overline{(A+B)}^A= \overline{A+B}$ , where $\overline{A+B}$  
		denotes the usual closure of $A+B$. 
	\end{enumerate} 
\end{theorem}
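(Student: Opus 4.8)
The plan is to construct the resolvent of $A+B$ (or of its appropriate closure) explicitly as a Dunford-type contour integral, exploiting commutativity through the algebraic identity $\lambda-A-B=(\lambda-\mu-A)+(\mu-B)$. Fix $\lambda\notin\sigma(A)+\sigma(B)$; this hypothesis says exactly that $\sigma(B)$ and $\lambda-\sigma(A)$ are disjoint, so one can hope to separate them by a contour $\Gamma$ lying in $\rho(B)$ along which $\lambda-\mu\in\rho(A)$, and to set
\[
S:=\frac{1}{2\pi i}\int_\Gamma R(\mu,B)\,R(\lambda-\mu,A)\,d\mu .
\]
Because $A$ and $B$ commute, each resolvent commutes with the other operator, which is what makes the integrand collapse after one multiplies by $\lambda-A-B$.

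For assertion (i), assume $B$ is bounded, so $\sigma(B)$ is compact; I would choose $\Gamma$ to be a positively oriented closed curve surrounding $\sigma(B)$ and small enough that $\lambda-\mu\in\rho(A)$ for every $\mu$ in the closed region bounded by $\Gamma$ (possible since $\lambda-\sigma(B)\subset\rho(A)$ is open and $\sigma(B)$ is compact). The integral then converges in operator norm. Multiplying by $\lambda-A-B$ and using the decomposition, together with the cancellations $(\lambda-\mu-A)R(\lambda-\mu,A)=I$ and $(\mu-B)R(\mu,B)=I$, gives
\[
(\lambda-A-B)S=\frac{1}{2\pi i}\int_\Gamma R(\mu,B)\,d\mu+\frac{1}{2\pi i}\int_\Gamma R(\lambda-\mu,A)\,d\mu .
\]
The first integral is the Riesz projection of $B$ for the spectral set $\sigma(B)$, hence equals $I$; the second vanishes by Cauchy's theorem, since $\mu\mapsto R(\lambda-\mu,A)$ is analytic on and inside $\Gamma$. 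Reading the same computation on $D(A+B)=D(A)$ yields $S(\lambda-A-B)=I$, so $\lambda\in\rho(A+B)$ and (i) follows.

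For assertion (ii) I would use the same device, but now $\sigma(B)$ is unbounded and $\Gamma$ must be an unbounded contour. Here \emph{condition P} supplies precisely the needed geometry: the sectorial bounds confine $\sigma(A)$ and $\sigma(B)$ to complementary sectors, and the strict inequality $\theta'<\theta$ guarantees that the tails of $\sigma(B)$ and of $\lambda-\sigma(A)$ run off to infinity inside sectors of half-opening $\pi/2+\theta'$ and $\pi/2+\theta$ about the negative real axis, which can therefore be separated by a pair of rays. Taking $\Gamma$ along such rays (closed up in the bounded region using the disjointness of $\sigma(B)$ and $\lambda-\sigma(A)$), the estimates $\|R(\mu,B)\|\le C/|\mu|$ and $\|R(\lambda-\mu,A)\|\le C/|\lambda-\mu|$ make the integrand $O(|\mu|^{-2})$, so $S$ converges absolutely and is bounded.

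The main obstacle is that, unlike in (i), $S$ need not map $\X$ into $D(A)\cap D(B)$: applying $A$ or $B$ under the integral leaves a factor decaying only like $|\mu|^{-1}$, so the resulting integral is merely conditionally convergent and $A+B$ need not be closed. This is exactly why the statement passes to the $A$-closure. Concretely I would (a) verify $A$-closability of $A+B$ from the sectorial estimates (if $x_n\to0$ and $(A+B)x_n\to y$ in $\mathcal{T}_A$, then $y=0$); (b) check on the core $D(A)\cap D(B)$ that $S(\lambda-A-B)=I$, where after the decomposition one integral reproduces $x$ through the (improper) sectorial functional calculus and the other vanishes by Cauchy's theorem for the chosen geometry; and (c) show that $S$ maps into $D(\overline{(A+B)}^A)$ with $(\lambda-\overline{(A+B)}^A)S=I$, approximating $Sx$ in the $\mathcal{T}_A$-topology by elements of $D(A+B)$. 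Together these identify $S$ with the resolvent of $\overline{(A+B)}^A$ at $\lambda$ and give $\sigma(\overline{(A+B)}^A)\subset\sigma(A)+\sigma(B)$. Finally, when $D(A)$ is dense the operator $R(\lambda,A)$ has dense range, and comparing the $\mathcal{T}_A$-topology with the norm topology shows that the $A$-closure and the ordinary closure of $A+B$ coincide, yielding the last assertion.
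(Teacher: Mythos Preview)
The paper does not give its own proof of this theorem: it simply refers the reader to \cite[Theorems 7.2, 7.3]{3} (Arendt--R\"abiger--Sourour). Your contour-integral construction of the resolvent via
\[
S=\frac{1}{2\pi i}\int_\Gamma R(\mu,B)\,R(\lambda-\mu,A)\,d\mu
\]
is exactly the method used in that reference, so in effect you have reproduced the intended argument rather than offered an alternative. The sketch for part (i) is complete and correct; for part (ii) your outline correctly identifies the key technical points (choice of an unbounded contour in the gap between the sectors, the $O(|\mu|^{-2})$ decay of the integrand, and the need to pass to the $A$-closure because $S$ need not map into $D(A)\cap D(B)$), though of course the verification of (a)--(c) requires some care with the improper integrals that you have only indicated.
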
 

\begin{proof} For the proof we refer the reader to \cite[Theorems 7.2, 7.3]{3}.
\end{proof} 
\subsection{Product of two $C_0$-semigroups of commuting operators}
Let $A$ and $B$ be the generators of two $C_0$-semigroups $S(t)$ and $T(t)$ in a Banach space $\X$ that commute with each other, that is, $S(t)T(t)=T(t)S(t)$ for all $t\ge 0$. Then, we have the following 
\begin{lemma}\label{lem comm}
 $R(t):= S(t)T(t)$ is a $C_0$-semigroup with generator $C=\overline{A+B}$	 
\end{lemma}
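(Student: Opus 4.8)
The plan is to verify the semigroup axioms for $R(t)$ directly and then to identify its generator by exhibiting $D(A)\cap D(B)$ as a core on which the generator acts as $A+B$.

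First I would check that $(R(t))_{t\ge 0}$ is a $C_0$-semigroup. The identity $R(0)=I$ is immediate, and the semigroup law $R(t+s)=R(t)R(s)$ follows from the commutativity of the two semigroups, i.e. $S(s)T(t)=T(t)S(s)$ for all $s,t\ge 0$ (equivalently, the commutation of their resolvents, Definition \ref{def 2.2}), since $R(t+s)=S(t)S(s)T(t)T(s)=S(t)T(t)S(s)T(s)$. Strong continuity at $t=0$ follows by writing $R(t)x-x=S(t)(T(t)x-x)+(S(t)x-x)$ and using the local boundedness $\|S(t)\|\le Me^{\omega t}$ together with the strong continuity of $S$ and $T$; a standard argument then gives continuity for all $t\ge 0$.

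Denote by $C$ the generator of $R(t)$. The second step is the inclusion $A+B\subset C$. For $x\in D(A)\cap D(B)=D(A+B)$ I would write the difference quotient as
\begin{equation*}
	\frac{R(h)x-x}{h}=S(h)\frac{T(h)x-x}{h}+\frac{S(h)x-x}{h},
\end{equation*}
and let $h\to 0^+$: the second term tends to $Ax$, while the first tends to $Bx$ because $\frac{T(h)x-x}{h}\to Bx$ and $S(h)\to I$ strongly with $\|S(h)\|$ bounded near $0$. Hence $x\in D(C)$ and $Cx=Ax+Bx$. Since $C$ is closed, this yields $\overline{A+B}\subset C$.

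The heart of the proof is the reverse inclusion, which I would obtain by showing that $D(A)\cap D(B)$ is a core for $C$. This rests on two facts. (a) \emph{Density}: using that the resolvents $R(\lambda,A)$ and $R(\mu,B)$ commute and that $R(\lambda,A)$ leaves $D(B)$ invariant, the element $\lambda R(\lambda,A)\,\mu R(\mu,B)x$ lies in $D(A)\cap D(B)$ for every $x\in\X$; letting first $\lambda\to\infty$ and then $\mu\to\infty$ and invoking $\lim_{\lambda\to\infty}\lambda R(\lambda,A)y=y$, one approximates an arbitrary $x$ and concludes $\overline{D(A)\cap D(B)}=\X$. (b) \emph{Invariance}: from $S(s)T(t)=T(t)S(s)$ it follows, by differentiating in $s$, that $T(t)$ leaves $D(A)$ invariant with $AT(t)=T(t)A$ there, and symmetrically $S(t)$ leaves $D(B)$ invariant; combined with the standard invariance of $D(A)$ under $S$ and of $D(B)$ under $T$, this shows that $R(t)=S(t)T(t)$ maps $D(A)\cap D(B)$ into itself. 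A dense, $R(t)$-invariant subspace of $D(C)$ is a core for $C$; since on this core $C$ coincides with $A+B$, we conclude $C=\overline{A+B}$. I expect the main obstacle to be the density statement (a), where the interchange of the two resolvent limits must be handled with care, together with the verification in (b) that commutation of the semigroups propagates to the mutual invariance of the two domains.
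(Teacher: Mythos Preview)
Your argument is correct and is the standard proof of this classical fact. The paper itself does not give a proof: its entire justification is the citation ``For the proof see \cite[p.~24]{nag}.'' What you have written is essentially the argument found in that reference (the ``core'' method: show $A+B\subset C$ on $D(A)\cap D(B)$, then prove that $D(A)\cap D(B)$ is dense and $R(t)$-invariant, hence a core for $C$). Two minor remarks: (i) the concern you raise about ``interchanging the two resolvent limits'' is not an issue, since you take the limits sequentially and only use $\lambda R(\lambda,A)y\to y$ for $y\in\X=\overline{D(A)}$; (ii) the paper's hypothesis is phrased as $S(t)T(t)=T(t)S(t)$ for all $t$, whereas you (correctly) need and use the stronger two-parameter commutation $S(s)T(t)=T(t)S(s)$; this is implicit in the paper's reference to resolvent commutation (Definition~\ref{def 2.2}) and is what is actually assumed in \cite{nag}.
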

\begin{proof}
	For the proof see \cite[p.24]{nag}.
\end{proof}

\section{Main Results}
\subsection{Existence and uniqueness of bounded solutions}
\begin{lemma}\label{lem 3.1}
Let $\Lambda \subset \R$ be a closed subset. Then,
$$
\sigma (\tilde{\cal D}|_{\Y_{\Lambda}})=\Lambda 
$$
Similarly,
$$
\sigma (\tilde{\cal D}|_{\Y^C_{\Lambda}})=\Lambda 
$$

\end{lemma}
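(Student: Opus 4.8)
The plan is to prove the two inclusions $\Lambda \subseteq \sigma(\tilde{\cal D}|_{\Y_{\Lambda}})$ and $\sigma(\tilde{\cal D}|_{\Y_{\Lambda}}) \subseteq \Lambda$ separately, where $\Y_\Lambda$ denotes the spectral subspace $\{\tilde g \in \Y : sp(g)\subset \Lambda\}$ and $\Y^C_\Lambda$ its analogue inside $\Y^C$. Recall that $\Y_\Lambda$ is a closed subspace invariant under the translation group $(\bar{S}(t))_{t\in\R}$, so by the lemma identifying $\tilde{\cal D}$ as the generator of this group of isometries, $\tilde{\cal D}|_{\Y_\Lambda}$ generates the restricted group, which is again isometric; consequently its spectrum is a closed set, understood throughout in the convention of Lemma \ref{lem 2.3} (where $\xi_0$ is counted as spectral exactly when $i\xi_0$ is a genuine spectral value).

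For $\Lambda \subseteq \sigma(\tilde{\cal D}|_{\Y_{\Lambda}})$ I would exhibit explicit eigenvectors. Fix $\xi \in \Lambda$ and $0\ne x\in\X$, and set $g_\xi(t):=e^{i\xi t}x$. Then $g_\xi\in BUC_C(\R^+,\X)$, since its range lies on the compact set $\{zx:|z|=1\}$; moreover ${\cal D}g_\xi=i\xi g_\xi$, so $\hat{g}_\xi(\lambda)=(\lambda-i\xi)^{-1}\tilde{g}_\xi$ has its only singularity at $i\xi$, whence $sp(g_\xi)=\{\xi\}\subset\Lambda$ and $\tilde{g}_\xi\in\Y^C_\Lambda\subseteq\Y_\Lambda$ is nonzero (as $e^{i\xi\cdot}x\notin C_0(\R^+,\X)$). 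Passing ${\cal D}g_\xi=i\xi g_\xi$ to the quotient gives $\tilde{\cal D}\tilde{g}_\xi=i\xi\tilde{g}_\xi$, so $i\xi$ is an eigenvalue of $\tilde{\cal D}|_{\Y^C_\Lambda}$ and of $\tilde{\cal D}|_{\Y_\Lambda}$, i.e. $\xi$ lies in the spectrum. As $\xi\in\Lambda$ was arbitrary, this one family settles the reverse inclusion for both $\Y_\Lambda$ and $\Y^C_\Lambda$ simultaneously.

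For $\sigma(\tilde{\cal D}|_{\Y_{\Lambda}})\subseteq\Lambda$ I would take $\xi_0\notin\Lambda$ and show $i\xi_0\in\rho(\tilde{\cal D}|_{\Y_\Lambda})$. Since $\Lambda$ is closed, $d:=\mathrm{dist}(\xi_0,\Lambda)>0$, and every $g\in\Y_\Lambda$ satisfies $\xi_0\notin sp(g)$, so $\hat{g}=R(\cdot,\tilde{\cal D})\tilde{g}$ continues analytically across $i\xi_0$. The substance of the inclusion is to upgrade this pointwise analyticity into the existence of the resolvent as a bounded operator on $\Y_\Lambda$. The clean route is the Arveson spectral-subspace theory: the Arveson spectrum of the restricted group $(\bar{S}(t)|_{\Y_\Lambda})_{t\in\R}$ equals $\overline{\bigcup_{g\in\Y_\Lambda}sp(g)}\subseteq\overline{\Lambda}=\Lambda$, and the spectrum of the generator of a bounded group is $i$ times the Arveson spectrum of the group; combined with Lemma \ref{lem 2.3} this yields $\sigma(\tilde{\cal D}|_{\Y_\Lambda})\subseteq\Lambda$. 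Equivalently, one builds the resolvent by hand by choosing $\phi\in L^1(\R)$ with $\bar{\cal F}\phi(\xi)=(i\xi_0-i\xi)^{-1}$ on a neighborhood of $\Lambda$ (possible because $\Lambda$ sits at distance $d$ from $\xi_0$, so a cut-off reciprocal is a Fourier transform of an $L^1$ function) and verifying that $\phi(\bar{S})$ acts as $R(i\xi_0,\tilde{\cal D}|_{\Y_\Lambda})$, since on $\Y_\Lambda$ only the values of the symbol on $\Lambda$ are felt. The same argument applies verbatim to $\Y^C_\Lambda$, as $\Y^C$ is a closed $\bar{S}$-invariant subspace on which the group remains isometric.

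The main obstacle is exactly this forward inclusion: passing from ``each individual local resolvent continues past $i\xi_0$'' to a uniformly bounded global resolvent on the subspace. Such uniformity is not automatic for restrictions to invariant subspaces, and it is precisely what the spectral-subspace machinery (the multiplier $\phi(\bar{S})$, or the identity relating $\sigma(\tilde{\cal D}|_{\Y_\Lambda})$ to the Arveson spectrum) supplies; I would therefore lean on the Arveson-spectrum results recalled from \cite{arebat2} rather than attempt a bare-hands resolvent estimate.
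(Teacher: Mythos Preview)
Your argument is correct and runs along the same lines as the paper's. Both exhibit $e^{i\xi t}x_0$ as an eigenvector of $\tilde{\cal D}|_{\Y_\Lambda}$ for the inclusion $i\Lambda\subset\sigma$, and both rest the opposite inclusion on the Arveson--spectrum identification of Lemma~\ref{lem 2.3}. The only difference is packaging: the paper, rather than invoking the Arveson spectrum of the whole subspace at once, fixes $\xi_0\notin\Lambda$ and an arbitrary $\bar f\in\Y_\Lambda$, solves $(i\xi_0-\tilde{\cal D})\bar u=\bar f$ \emph{inside the cyclic subspace} ${\cal M}_f$ (where $\sigma(\tilde{\cal D}|_{{\cal M}_f})=i\,sp(f)\subset i\Lambda$ by Lemma~\ref{lem 2.3}, so $i\xi_0$ is in the resolvent there), and then checks uniqueness on all of $\Y_\Lambda$ via the spectral inclusion $sp(\bar u-\bar v)\subset\{\xi_0\}\cap\Lambda=\emptyset$; closedness of $\tilde{\cal D}$ then gives the bounded inverse. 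Your route~(a) is the same argument compressed into one citation. One caveat on your alternative~(b): for unbounded $\Lambda$ a cut-off of $(i\xi_0-i\xi)^{-1}$ that agrees with the full reciprocal near infinity is generally \emph{not} the Fourier transform of an $L^1$ function (the $1/|\xi|$ tail is too slow), so the resolvent need not be of the form $\phi(\bar S)$; this does not affect your proof since you rely primarily on~(a).
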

\begin{proof}
It suffices to show that given $\xi_0 \not\in \Lambda$, for each $f\in BUC(\R^+,\X)$ such that $\bar f\in\Y_{\Lambda}$ the equation
\begin{equation}\label{2.1}
	\bar u'(t)-\xi_0 \bar u(t)=\bar f(t), t\ge 0, 
\end{equation}
has a unique solution $\bar u\in \Y_{\Lambda}$. First, the uniqueness follows from \cite[Corollary 4.2]{luomin}. In fact, by \cite[Corollary 4.2]{luomin}, if there is two functions $\bar u,\bar v$, then,
$$
sp (\bar u-\bar v) \subset \{\xi_0\} ,
$$
while $sp (\bar u-\bar v)\subset \Lambda$, so $sp (\bar u-\bar v)=\emptyset$. That is, $\bar u-\bar v=0\in \Y_\Lambda$. 
The existence of such a function $\bar u$ follows from the fact that $sp(u)$ is actually the Arveson spectrum of $\bar u$ with respect to the isometric group $(\bar S(t)_{t\in\R})$ in $\Y$. That is, 
$$
isp(f) =\sigma (\tilde{\cal D}|_{\cal M_{f}})\subset i\Lambda .
$$
Therefore, $i\xi_0\in \rho (\tilde{\cal D}|_{\cal M_{f}})$, and this there exists a unique $\bar u \in {\cal M_{f}}$ as a solution to Eq.(\ref{2.1}). It is easy to see that as $\bar u\in {\cal M_{f}}$, $sp(\bar u)\subset sp(f)\subset \Lambda$. This shows that
$$
\sigma (\tilde{\cal D}|_{\Y_{\Lambda}})\subset i\Lambda .
$$
By choosing $u_\lambda (t)=e^{i\lambda t}x_0$ for each $\lambda \in \Lambda$ and $x_0\not=0$, we can show that $\overline{u_\lambda (\cdot)}$ is an eigenvector of 
$\tilde{\cal D}|_{\Y_{\Lambda}}$ with the eigenvalue $\lambda$, so $\lambda \in \sigma (\tilde{\cal D}|_{\Y_{\Lambda}})$. This completes the proof of the lemma.
\end{proof}

\begin{proposition}\label{pro 1}
Let $A$ be the generator of an analtic $C_0$-semigroup $(T(t)_{t\in \R^+})$. Then, the multiplication operators  $\bar T(t):BUC_C(\R^+,\X) \ni  f(\cdot ) \mapsto \overline{T}(t)  f(\cdot )
$ forms an analytic $C_0$-semigroup $(\bar {T}(t)_{t\ge 0})$ in $BUC_C(\R^+,\X)$, so the induced semigroup $\tilde T(t)$ is an analytic $C_0$-semigroup in $\Y^C$. Moreover, the generator $\bar A$ of this semigroup $(\bar {T}(t)_{t\ge 0})$ is the operator $\bar A_T$ defined as below:
\begin{align}
	D(\bar A_T)  := \{ f\in BUC_C(\R^+,\X)&: f(t) \in D(A), \forall t\in \R^+, Af(\cdot ) \in BUC_C(\R^+,\X)\} \nonumber \\
	(\bar A_T f)(t)&:= Af(t), \ \forall t\in \R^+ , \ f\in D(\bar A). \label{gen}
\end{align}
\end{proposition}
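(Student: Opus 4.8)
The plan is to establish the three assertions in sequence: that $(\bar T(t))_{t\ge 0}$ is a strongly continuous semigroup on $BUC_C(\R^+,\X)$, that its generator is exactly $\bar A_T$, and that it is analytic; the passage to $\Y^C$ then follows by factoring through the invariant closed subspace $C_0(\R^+,\X)$. First I would check that each $\bar T(t)$ maps $BUC_C(\R^+,\X)$ into itself and that the semigroup law holds. Boundedness and uniform continuity of $\bar T(t)f$ are immediate from the boundedness of $T(t)$ as an operator on $\X$, and since $T(t)$ is continuous it carries the precompact range of $f$ to a precompact set, so $\bar T(t)f \in BUC_C(\R^+,\X)$; the identity $\bar T(t+s)=\bar T(t)\bar T(s)$ is inherited pointwise from the semigroup law for $T$. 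The delicate point — and the reason the precompact range is imposed — is \emph{strong continuity}: one must show $\sup_{s\ge 0}\|T(t)f(s)-f(s)\| \to 0$ as $t\downarrow 0$. This would fail on all of $BUC(\R^+,\X)$, since $T(t)x\to x$ need not be uniform in $x$; but on the compact closure $K$ of the range of $f$ the convergence $\sup_{x\in K}\|T(t)x-x\|\to 0$ does hold (a standard consequence of strong continuity of $T$ on compacta), and this is exactly the estimate needed. I expect this to be the main obstacle, in the sense that it is the only step where the structure of $BUC_C(\R^+,\X)$ is genuinely used rather than formal manipulation.

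Next I would identify the generator. For $\lambda\in\rho(A)$ I would define $(R_\lambda f)(s):=R(\lambda,A)f(s)$, note that it preserves $BUC_C(\R^+,\X)$ for the same reasons as above, and verify directly that $(\lambda-\bar A_T)R_\lambda=I$ and $R_\lambda(\lambda-\bar A_T)=I$ on $D(\bar A_T)$, so that $\lambda\in\rho(\bar A_T)$ and $R(\lambda,\bar A_T)=R_\lambda$. To see that $\bar A_T$ coincides with the generator $\bar A$ of $(\bar T(t))_{t\ge 0}$, I would compute the Laplace transform: for $\Re\lambda$ large, $(R(\lambda,\bar A)f)(s)=\int_0^\infty e^{-\lambda t}T(t)f(s)\,dt=R(\lambda,A)f(s)$, using that pointwise evaluation is a bounded functional and hence commutes with the Bochner integral. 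Since $R(\lambda,\bar A)$ and $R(\lambda,\bar A_T)=R_\lambda$ agree, the two operators coincide.

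Analyticity would then follow by transporting the sectorial resolvent bound. Since $A$ generates an analytic semigroup it is of class $\Sigma(\theta+\pi/2,R)$ for some $\theta\in(0,\pi/2)$, so $\Sigma(\theta+\pi/2,R)\subset\rho(A)$ and $\sup_{\lambda}\|\lambda R(\lambda,A)\|<\infty$. From $\sup_{s}\|R(\lambda,A)f(s)\|\le\|R(\lambda,A)\|\,\sup_{s}\|f(s)\|$ I obtain $\|R(\lambda,\bar A_T)\|\le\|R(\lambda,A)\|$, so the same sector lies in $\rho(\bar A_T)$ with the same bound; hence $\bar A_T$ is of class $\Sigma(\theta+\pi/2,R)$ and generates an analytic semigroup, which is necessarily $(\bar T(t))_{t\ge 0}$ since $\bar A_T$ is its generator. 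Finally, $C_0(\R^+,\X)$ is closed in $BUC_C(\R^+,\X)$ and invariant under each $\bar T(t)$ (if $f(s)\to 0$ then $T(t)f(s)\to 0$ by continuity of $T(t)$), so $(\bar T(t))_{t\ge 0}$ descends to the quotient $\Y^C$; because $R_\lambda$ leaves $C_0(\R^+,\X)$ invariant, its induced operator on $\Y^C$ is the resolvent of the quotient generator, and since the quotient map is a contraction the same sectorial bound passes to $\Y^C$, yielding analyticity of the induced semigroup $(\tilde T(t))_{t\ge 0}$.
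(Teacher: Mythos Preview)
Your proof is correct and follows the same overall outline as the paper---strong continuity via compactness of $\overline{\mathrm{range}(f)}$, identification of the generator through resolvents, and then analyticity---but the two diverge in the middle steps in ways worth noting.

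For the generator, the paper takes a less direct route: it first observes that pointwise evaluation shows $\bar A\subset\bar A_T$, then verifies the Hille--Yosida estimates for $\bar A_T$ to conclude that $\bar A_T$ generates \emph{some} $C_0$-semigroup $S(t)$, and finally argues that $S(t)=\bar T(t)$ by uniqueness of solutions to the abstract Cauchy problem on the dense set $D(\bar A)$. Your argument---computing the Laplace transform pointwise to see $R(\lambda,\bar A)=R_\lambda=R(\lambda,\bar A_T)$---reaches the same conclusion in one step and is cleaner. (One terminological quibble: evaluation at $s$ is a bounded linear \emph{operator} into $\X$, not a functional, but of course bounded operators commute with Bochner integrals just as well.) For analyticity, the paper goes the opposite direction: it simply invokes Pazy's definition and observes that the analytic extension of $T(\cdot)$ to a sector lifts pointwise to an analytic extension of $\bar T(\cdot)$; you instead transport the sectorial resolvent estimate $\|\lambda R(\lambda,A)\|\le C$ to $\bar A_T$ and invoke the resolvent characterization. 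Both are standard; the paper's is shorter, yours makes the connection to the class $\Sigma(\theta+\pi/2,R)$ (used later in the paper) explicit. Your treatment of the quotient passage to $\Y^C$ is also more detailed than the paper's, which leaves it implicit.
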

\begin{proof}
\begin{claim}
With the above notations $\bar T(t)$ is an analytic $C_0$-semigroup in $BCU_C(\R^+,\X)$.
\end{claim}
\begin{proof}
By \cite[Definition 5.1, p. 60]{paz} the analyticity of the multiplication (by $T(t)$)  semigroup $\bar T(t)$ follows from that of the semigroup $T(t)$ because
	$$
	[\bar T(t)f](\cdot ) := T(t)f(\cdot), \ f\in BUC_C(\R^+,\X), t\ge 0.
	$$
	Next, we show that $\bar T(t)$ is a $C_0$-semigroup. Set $K:=\overline{range (f)}$. Then, as $K$ is compact by \cite[Lemma 5.2]{engnag} the map
	\begin{equation}
		[0,1]\times K \ni (t,x) \mapsto T(t)x \in \X 
	\end{equation}
	is uniformly continuous. This yields that for each $\epsilon >0$ there exists a $\delta >0$ such that if $|t-t'|<\delta$ and  $\| y-y'\|<\delta$, where $t,t'\in[0,1], y,y'\in \X$, then
	\begin{equation}
	\| T(t)y-T(t')y'\| < \epsilon .
	\end{equation}
	Consequently,
	\begin{equation}
	\sup_{x\in \R^+}	\| T(t)f(x)-f(x)\| < \epsilon ,
	\end{equation}
	for all $0\le t <\delta$. In other words,
	\begin{equation}
		\lim_{t\to 0^+} \bar T(t) \bar f (\cdot ) =\bar f(\cdot ).
	\end{equation}
\end{proof}
Let $\bar A$ be the generator of the semigroup $(\bar T(t))_{t\ge 0}$. Let $f\in D({\bar A}_0)$. By evaluating the function $\bar A f(\cdot)$, for each $f\in D(\bar A)$, pointwise we see that
$$
{\bar A} f(x) = Af(x)=[\bar A_T f](x), \ x \in [0,\infty) .
$$
That is, $\bar A $ is part of the operator $\bar A_T$ defined in (\ref{gen}). That means, $D(\bar A) \subset D(\bar A_T)$ and $ \bar A f =\bar A_T f$ whenever $f\in D(\bar A)$.
\begin{claim}
With the above notations $\bar A=\bar A_T$.	
\end{claim}
\begin{proof}
First, we see that since $D(\bar A) \subset D(\bar A_T)$, it follows that $D(\bar A_T)$ is dense in $BUC_C(\R^+,\X)$. Next, as $T(t)$ is a $C_0$-semigroup there are positive constants $M,\omega$ such that let $\|T(t)\| \le Me^{\omega t}$ for all $t$. By the Hille-Yosida Theorem, this fact is equivalent to the following: for all real $\lambda >\omega$
\begin{align}
&\mbox{The resolvent set} \ \rho (A) \ \mbox{contains  the ray} \ (\omega, \infty)\label{3.7}\\ 
& \| R(\lambda ,A)\| \le \frac{M}{(\Re \lambda -\omega)^n} \ \mbox{for} \ \Re \lambda >\omega , \ n=1,2,\cdots .\label{3.8}
\end{align}
If $f(\cdot )\in BUC_C(\R^+,\X)$, then, $R(\lambda ,A)f(\cdot )\in BUC_C(\R^+,\X)$ as well. And we can verify easily that $\lambda \in \rho (\bar A_T)$ with $[R(\lambda,\bar A_T)f](\cdot ) =R(\lambda A)f(\cdot )$. That is why similar conditions to  (\ref{3.7}) and (\ref{3.8}) will be satisfied for $\bar A_T$, so $\bar A_T$ is the generator of a $C_0$-semigroup tat we may denote by $S(t)$. Next, we can see that this semigroup $S(t)$ must coincide with $\bar T(t)$. In fact, let $f(\cdot)\in D(\bar A)$, by the general theory of $C_0$-semigroup (see e.g. \cite[Theorem 1.3]{paz}), both $\bar T(t)f(\cdot)$ and $S(t)f(\cdot)$ are the unique solution of the Cauchy Problem $u'(t)=\bar A u(t)$, $u(0)=f(\cdot )$, so they must be the same, that is
$$
\bar T(t)f(\cdot ) =S(t)f(\cdot), \ f(\cdot )\in D(\bar A).
$$
As $\bar T(t)$ and $S(t)$ are bounded linear operators and $D(\bar A)$ is dense in $BUC_C(\R^+,\X)$, we have $\bar T(t)=S(t)$ for all $t\ge 0$. This yields that $\bar A=\bar A_T$.
\end{proof}
The proof of the proposition follows immediately from the above claims.
\end{proof}
\begin{definition}
Let $A$ and $f$ be as in Eq.(\ref{eq}). Then, a function $u\in BUC(\R^+,\X)$ is said to be a mild solution of Eq.(\ref{eq}) if the following identity holds true for all $t\ge 0$:
\begin{equation}\label{mi}
	u(t)=T(t)u(0) +\int^t_0 T(t-s)f(s)ds.
\end{equation}
 The function $w\in BUC(\R^+,\X)$ is said to be an asymptotic mild solution of Eq.(\ref{eq}) if there is a function $\varepsilon (\cdot )\in C_0(\R^+,\X)$ such that for all $t\ge 0$,
 \begin{equation}
 	u(t)=T(t)u(0) +\int^t_0 T(t-s)\left(f(s)+\varepsilon(s)\right)ds.
 \end{equation}
 The operator $\cal G$ is defined with its domain consisting of all $u\in BUC(\R^+,\X)$ such that there is $f\in BUC(\R^+,\X)$ so that $u$ is the solution in Eq.(\ref{mi})
\end{definition}
It is proved in \cite{luoloiminmat} that both $\cal G$ and $\bar {\cal G}$ are well defined in $BUC_C(\R^+,\X)$ and $\Y^C$. 
\begin{lemma}\label{lem 3.6}
The operator  ${\cal G}$ defined as above is a closed operator in $BUC_C(\R ^+,\X)$ and is an extension of the operator ${\cal D} -{\bar A}$.
\end{lemma}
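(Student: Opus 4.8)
The plan is to establish the two assertions separately: first that ${\cal G}$ is an extension of ${\cal D}-\bar A$, and then that ${\cal G}$ is closed in $BUC_C(\R^+,\X)$. Throughout I would use the description of ${\cal G}$ as the operator assigning to a mild solution $u$ its forcing term $f$, i.e.\ ${\cal G}u=f$ precisely when $u(t)=T(t)u(0)+\int_0^t T(t-s)f(s)\,ds$ for all $t\ge 0$. Single-valuedness of this assignment is already guaranteed by the well-definedness result quoted from \cite{luoloiminmat}, so I may speak of ${\cal G}u$ unambiguously.

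For the extension I would take an arbitrary $u\in D({\cal D})\cap D(\bar A_T)=D({\cal D}-\bar A)$. By Proposition \ref{pro 1} this means $u,u'\in BUC_C(\R^+,\X)$, $u(t)\in D(A)$ for every $t$, and $Au(\cdot)\in BUC_C(\R^+,\X)$; in particular $u$ is a strict solution of (\ref{eq}) with continuous forcing term $f:=u'-Au(\cdot)=({\cal D}-\bar A)u\in BUC_C(\R^+,\X)$. The key step is then the classical passage from a strict solution to the variation-of-constants formula: for fixed $t$ the map $s\mapsto T(t-s)u(s)$ on $[0,t]$ is continuously differentiable, and its derivative equals $T(t-s)\big(u'(s)-Au(s)\big)=T(t-s)f(s)$. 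Integrating from $0$ to $t$ yields $u(t)-T(t)u(0)=\int_0^t T(t-s)f(s)\,ds$, so $u\in D({\cal G})$ and ${\cal G}u=f=({\cal D}-\bar A)u$. This gives ${\cal D}-\bar A\subset {\cal G}$.

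For closedness I would take a sequence $(u_n)\subset D({\cal G})$ with $u_n\to u$ and ${\cal G}u_n=f_n\to f$, both convergences in $BUC_C(\R^+,\X)$; since this is a closed subspace, $u$ and $f$ again lie in it. Writing the defining identity $u_n(t)=T(t)u_n(0)+\int_0^t T(t-s)f_n(s)\,ds$, I pass to the limit in $n$ for each fixed $t$: the left side tends to $u(t)$; since $u_n(0)\to u(0)$ and $T(t)$ is bounded, $T(t)u_n(0)\to T(t)u(0)$; and, using the exponential bound $\|T(\tau)\|\le Me^{\omega\tau}$ on the compact interval $[0,t]$, one estimates $\big\|\int_0^t T(t-s)(f_n-f)(s)\,ds\big\|\le t\,Me^{\omega t}\,\|f_n-f\|_\infty\to 0$, so the integral term converges to $\int_0^t T(t-s)f(s)\,ds$. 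Hence $u(t)=T(t)u(0)+\int_0^t T(t-s)f(s)\,ds$ for all $t\ge 0$, which is exactly the statement that $u\in D({\cal G})$ with ${\cal G}u=f$. This establishes that ${\cal G}$ is closed.

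The routine estimates in the closedness argument are harmless; the one step deserving genuine care is the differentiation of $s\mapsto T(t-s)u(s)$ in the extension part, since it relies on the standard but nontrivial fact that a $C^1$ function with values in $D(A)$ and continuous $A$-image may be differentiated against the semigroup by the product rule. I expect this to be the main technical obstacle, and I would justify it either by the general result that a strict solution of (\ref{eq}) is a mild solution (cf.\ \cite{paz}) or by a direct difference-quotient computation exploiting strong continuity of $T(\cdot)$ together with continuity of $s\mapsto Au(s)$.
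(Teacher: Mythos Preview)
Your proposal is correct and follows essentially the same approach as the paper: the extension part is the classical-solution-implies-mild-solution argument (which the paper invokes in one line, while you spell out the differentiation of $s\mapsto T(t-s)u(s)$), and the closedness part is a pointwise limit in the variation-of-constants identity. The only cosmetic difference is that the paper passes to the limit in the integral via the Lebesgue Dominated Convergence Theorem, whereas you use the direct estimate $\bigl\|\int_0^t T(t-s)(f_n-f)(s)\,ds\bigr\|\le t\,Me^{\omega t}\|f_n-f\|_\infty$; both are equally valid here.
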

\begin{proof}
Let $BUC_C(\R ^+,\X)\supset D(\cal G) \ni u_n(\cdot ) \to u(\cdot)\in BUC_C(\R ^+,\X)$  and $w_n(\cdot ):={\cal G}u_n(\cdot ) \to w(\cdot)\in BUC_C(\R ^+,\X)$ as $n \to \infty$.  We need to show that $u\in D({\cal G})$ and ${\cal G}u=w$. In fact, for each fixed $t\ge 0$, the strong continuity of the semigroup $(T(t)_{t\ge 0})$ yields that for each $t\ge 0$ the function
$[0,t]\ni s\mapsto T(t-s)w_n(s)$ is continuous in $[0,t]$ and bounded. Therefore, by the Lebesgue Dominated Convergence Theorem
$$
\lim_{n\to\infty} \int_0^tT(t-s)w_n(s)ds = \int^t_0 \lim_{n\to\infty} T(t-s)w_n(s) ds =\int^t_0 T(t-s)w(s)ds.
$$
Therefore, for each fixed $t\ge 0$,
\begin{align*}
u(t)= \lim_{n\to\infty} u_n(t) &=\lim_{n\to\infty} T(t)u_n(0) + \lim_{n\to\infty} \int^t_0 T(t-s)w_n(s)ds\\
&= T(t)u(0)+\int^t_0 T(t-s)w(s)ds.
\end{align*}
By definition $u\in D(\cal G)$ and $\cal Gu=w$. In other words, $\cal G$ is a closed linear operator. Next, a classical solution of the evolution equation $u'(t)=Au(t)$ must be a mild solution, so a simple interpretation of this fact yields that  $\cal D - \bar A \subset \cal G$.
\end{proof}
By the general theory of $C_0$-semigroups in quotient spaces (see e.g. \cite[p.61]{engnag}), as $\bar {\cal G}$ is closed, in $\Y^C$ we have
$$
\overline{\tilde{\cal D}-\tilde{\cal A} } \subset  \tilde {\cal G} .
$$

We are now ready to prove the following that is a main result of the paper
\begin{theorem}\label{the main1}
	Let $A$ be the generator of an analytic semigroup and $f\in BUC_C(\R^+,\X)$ such that
	$\sigma (A) \cap i \cdot sp(f) =\emptyset$. 
	Then, there exists an asymptotic mild solution $w(\cdot )$ of Eq.(\ref{eq}) with 
	$$
	sp(w) \subset sp(f)
	$$
	that is unique within a function $g\in C_0(\R^+,\X)$.
\end{theorem}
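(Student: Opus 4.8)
The plan is to recast Eq.(\ref{eq}) as an operator equation in the quotient space $\Y^C$ and to solve it by the method of sums of commuting operators (Theorem \ref{the 2.2}). Set $\Lambda := sp(f)$, which is a closed subset of $\R$ by Theorem \ref{the 3.8}(iii), and pass to the spectral subspace $\Y^C_\Lambda := \{ \tilde g \in \Y^C : sp(g)\subset \Lambda\}$. Since $\tilde f \in \Y^C_\Lambda$, and since the resolvents $R(\lambda,\tilde{\cal A})$ act pointwise and hence commute with translations (so they cannot enlarge the spectrum of a function, by the characterization in Lemma \ref{lem 2.3}), both $\tilde{\cal D}$ and $\tilde{\cal A}$ leave $\Y^C_\Lambda$ invariant and restrict to it. On this subspace Lemma \ref{lem 3.1} gives $\sigma(\tilde{\cal D}|_{\Y^C_\Lambda}) = i\cdot sp(f)$, while Proposition \ref{pro 1} together with the identity $R(\lambda,\bar A_T)g = R(\lambda,A)g(\cdot)$ yields $\sigma(\tilde{\cal A}|_{\Y^C_\Lambda}) \subset \sigma(A)$. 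Producing an asymptotic mild solution with the required spectral profile then amounts to inverting $\overline{\tilde{\cal D}-\tilde{\cal A}}$ on $\Y^C_\Lambda$, because $\overline{\tilde{\cal D}-\tilde{\cal A}}\subset \tilde{\cal G}$ (the inclusion recorded after Lemma \ref{lem 3.6}).

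Next I would verify the hypotheses of Theorem \ref{the 2.2}(ii) for the pair $\tilde{\cal A}$ and $-\tilde{\cal D}$ on $\Y^C_\Lambda$. They commute because the multiplication semigroup $\tilde T(t)$ and the translation group $\bar S(t)$ commute (Lemma \ref{lem comm} and Proposition \ref{pro 1}). For condition P of Definition \ref{def 2.3}: analyticity of the semigroup generated by $A$, preserved under the multiplication and quotient constructions of Proposition \ref{pro 1}, makes $\tilde{\cal A}$ an operator of class $\Sigma(\theta+\pi/2,R)$; on the other hand $-\tilde{\cal D}$ generates the isometric translation group, so $\|R(\lambda,-\tilde{\cal D})\|\le C/|\Re\lambda|$, and since $\sigma(-\tilde{\cal D})= -i\cdot sp(f)$ lies on the imaginary axis, any narrow sector $\Sigma(\pi/2-\theta',R)$ about the positive real axis lies in $\rho(-\tilde{\cal D})$ with $\sup\|\lambda R(\lambda,-\tilde{\cal D})\|<\infty$. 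Thus $-\tilde{\cal D}$ is of class $\Sigma(\pi/2-\theta',R)$, and choosing $\theta'<\theta$ secures condition P.

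With condition P in hand, Theorem \ref{the 2.2}(ii) shows that $\tilde{\cal A}-\tilde{\cal D}$ is $\tilde{\cal A}$-closable with
\[
\sigma\big(\overline{\tilde{\cal A}-\tilde{\cal D}}\big)\subset \sigma(\tilde{\cal A})+\sigma(-\tilde{\cal D})\subset \sigma(A)-i\cdot sp(f),
\]
and, since $D(\tilde{\cal A})$ is dense, the $\tilde{\cal A}$-closure coincides with the ordinary closure. The hypothesis $\sigma(A)\cap i\cdot sp(f)=\emptyset$ says precisely that $0$ is not of the form $a-is$ with $a\in\sigma(A)$, $s\in sp(f)$, so $0\in\rho\big(\overline{\tilde{\cal D}-\tilde{\cal A}}\big)$ on $\Y^C_\Lambda$. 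Setting $\tilde u := \big(\overline{\tilde{\cal D}-\tilde{\cal A}}\big)^{-1}\tilde f\in\Y^C_\Lambda$ and using $\overline{\tilde{\cal D}-\tilde{\cal A}}\subset\tilde{\cal G}$, we obtain $\tilde{\cal G}\tilde u=\tilde f$; lifting $\tilde u$ to a representative $w\in BUC_C(\R^+,\X)$ yields an asymptotic mild solution of Eq.(\ref{eq}), while $\tilde u\in\Y^C_\Lambda$ forces $sp(w)\subset\Lambda=sp(f)$. For uniqueness, if $w_1,w_2$ are two such solutions then $\widetilde{w_1-w_2}\in\Y^C_\Lambda$ solves $\overline{(\tilde{\cal D}-\tilde{\cal A})}\,(\widetilde{w_1-w_2})=0$, and injectivity of the invertible operator gives $\widetilde{w_1-w_2}=0$, i.e. $w_1-w_2\in C_0(\R^+,\X)$.

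The step I expect to be the main obstacle is the verification of condition P, and specifically the resolvent estimate for $\tilde{\cal D}$: one must use crucially that the translation group induced on $\Y^C$ (equivalently on $\Y^C_\Lambda$) is a group of isometries, so that its resolvent decays like $1/|\Re\lambda|$ off the imaginary axis and the narrow-sector bound of Definition \ref{def 2.3} holds uniformly. A secondary technical point, to be checked with care, is that analyticity and the sectorial resolvent bound for $\tilde{\cal A}$ genuinely survive both the passage to the multiplication semigroup on $BUC_C(\R^+,\X)$ and the restriction to the invariant subspace $\Y^C_\Lambda$. Once these two sectorial estimates are established, the cited sum-of-operators theorem delivers the required invertibility essentially automatically.
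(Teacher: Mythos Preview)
Your proposal is correct and follows essentially the same route as the paper's own proof: work in $\Y^C_\Lambda$ with $\Lambda=sp(f)$, use Lemma~\ref{lem 3.1} and Proposition~\ref{pro 1} to control $\sigma(\tilde{\cal D})$ and $\sigma(\tilde{\cal A})$, apply Theorem~\ref{the 2.2}(ii) to invert $\overline{\tilde{\cal D}-\tilde{\cal A}}$, and then pass through the inclusion $\overline{\tilde{\cal D}-\tilde{\cal A}}\subset\tilde{\cal G}$ to obtain an asymptotic mild solution. If anything, your verification of condition~P (the sectorial estimates for $\tilde{\cal A}$ from analyticity and for $-\tilde{\cal D}$ from the isometric group property) is spelled out more fully than in the paper, which invokes Theorem~\ref{the 2.2}(ii) directly after noting that $\tilde{\cal A}$ generates an analytic semigroup. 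One small caution on your uniqueness paragraph: you write that $\widetilde{w_1-w_2}$ lies in $D(\overline{\tilde{\cal D}-\tilde{\cal A}})$, but a priori an asymptotic mild solution only lies in $D(\tilde{\cal G})$; this gap is closed by the identity $\overline{\tilde{\cal D}-\tilde{\cal A}}=\tilde{\cal G}$ established just after Lemma~\ref{lem 3.9}, and the paper itself simply declares uniqueness ``clear'' at this point.
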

\begin{proof}	
	Consider the operators $\tilde{\cal D}$ and $\tilde{\cal A}$ in $\Y^C_{\Lambda}$, where $\Lambda :=sp (f)$. By the above arguments (Lemma \ref{lem 3.1} and the proof of Proposition \ref{pro 1}) we have $\sigma (\tilde{\cal A}) \subset \sigma (A) $ and $\sigma (\tilde{\cal D})\subset i\Lambda$. This yields that
	$$
	0\not\in  \sigma (\tilde{\cal D} ) -\sigma (\tilde{\cal A}).
	$$
	Therefore, as $\tilde A$ is the generator of an analytic $C_0$-semigroup, by Part (ii) of Theorem \ref{the 2.2},
	in $\Y^C_{\Lambda}$ we have 
	$$
0\not\in 	\sigma (\overline{\tilde{\cal D} -  \tilde{\cal A}}) \subset \sigma (\tilde{\cal D} ) -\sigma (\tilde{\cal A}).
	$$
That means, the operator $\overline{\tilde{\cal D} -  \tilde{\cal A}}$ is invertible. The next step is to interprete this fact to finish the proof. We see that for each $f\in \Y^C_\Lambda$ there exists a unique $\bar u \in \Y^C_\Lambda$ such that
$$
\overline{\tilde{\cal D}-\tilde{\cal A} } \bar u =\bar f .
$$
Therefore, since $\cal G$ is a closed extension of $\cal D - \bar A $
$$
\overline{\tilde{\cal D}-\tilde{\cal A} } \bar u = \bar {\cal G}\bar u = \bar f.
$$
This means there exists a element $u\in BUC_C(\R^+,\X)$ such that $\cal Gu\in \bar f$. This in turn yields that $u$ is an asymptotic mild solution of Eq.(\ref{eq}). The uniqueness of $u$ within a function in $C_0(\R^+,\X)$ is clear.
\end{proof}
\subsection{Massera type Theorem for bounded solutions}
Before proceed we recall the concept of evolution semigroup associated with a $C_0$-semigroup.
\begin{definition}
Let $T(t)$ be a $C_0$-semigroup. Then, the semigroup $(T^h)_{h\ge 0}$ in $BUC_C(\R^+,\X)$ defined as
\begin{equation}
	T^hf(\cdot ):= \begin{cases}
		T(h)f(t-h), \ \mbox{if} \ t\ge h,\\
		T(t)g(0), \ \mbox{if} \ 0\le t\le h .
	\end{cases}
\end{equation}  
\end{definition}
As shown in \cite[Lemma 3.8]{luoloiminmat}, we have the following:
\begin{lemma}\label{lem 3.9}
	$(T^h)_{h\ge 0}$ forms a $C_0$-semigroup
	in $BUC_C(\R^+,\X)$. Moreover, $-\bar {\cal G}$ in $\Y^C$ is the generator of the  semigroup $(\bar T^h)_{h\ge 0}$ induced by $(T^h)_{h\ge 0}$ in $\Y^C$.
\end{lemma}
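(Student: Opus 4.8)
The plan is to treat the two assertions in turn. For the semigroup law I would verify $T^{h_1}T^{h_2}=T^{h_1+h_2}$ by a direct case analysis on the position of $t$ relative to $h_1$ and $h_1+h_2$: for $t\ge h_1+h_2$ one gets $T(h_1)T(h_2)f(t-h_1-h_2)=T(h_1+h_2)f(t-h_1-h_2)$; for $h_1\le t<h_1+h_2$ both sides reduce to $T(t)f(0)$; and for $0\le t<h_1$ one uses $(T^{h_2}f)(0)=f(0)$ to again obtain $T(t)f(0)$, while $T^0=I$ is immediate. Invariance of $BUC_C(\R^+,\X)$ and strong continuity then follow, exactly as in the proof of Proposition \ref{pro 1}, from precompactness of $\overline{\mathrm{range}(f)}=:K$ and uniform continuity of $(t,x)\mapsto T(t)x$ on $[0,1]\times K$: writing $T^hf(t)-f(t)=[T(h)f(t-h)-f(t-h)]+[f(t-h)-f(t)]$ for $t\ge h$ and $T^hf(t)-f(t)=[T(t)f(0)-f(0)]+[f(0)-f(t)]$ for $0\le t<h$, each bracket tends to $0$ uniformly in $t$ as $h\to 0^+$. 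Continuity of $T^hf$ at the junction $t=h$, together with precompactness of its range (contained in $T(h)K\cup\{T(t)f(0):0\le t\le h\}$, a union of two precompact sets), shows $T^hf\in BUC_C(\R^+,\X)$.

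For the generator, let $G$ denote the generator of $(\bar T^h)_{h\ge 0}$ in $\Y^C$, and first prove the inclusion $-\bar{\cal G}\subseteq G$. Take $u\in D(\cal G)$ and set $g:=\cal Gu$, so that $u(t)=T(t)u(0)+\int_0^tT(t-s)g(s)\,ds$. The semigroup property of $T(t)$ gives, for $t\ge h$,
\begin{equation*}
\frac{(T^hu)(t)-u(t)}{h}=-\frac1h\int_{t-h}^tT(t-s)g(s)\,ds,
\end{equation*}
whence $\frac{(T^hu)(t)-u(t)}{h}+g(t)=\frac1h\int_{t-h}^t\bigl[g(t)-T(t-s)g(s)\bigr]\,ds$, a quantity bounded in norm, uniformly in $t\ge h$, by some $\eta(h)\to 0$ (again using uniform continuity of $g$ and of $(t,x)\mapsto T(t)x$ on $K$). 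For $0\le t<h$ one has instead $\frac{(T^hu)(t)-u(t)}{h}=-\frac1h\int_0^tT(t-s)g(s)\,ds$, a term that need not be small but is supported on $[0,h]$. This is the crux of the matter: on $BUC_C(\R^+,\X)$ the difference quotient does not converge to $-g$ because of this boundary contribution near the origin, whereas in $\Y^C$ it does, since the boundary term is compactly supported, hence lies in $C_0(\R^+,\X)$ and is annihilated by the quotient map. Thus $\|\frac{\bar T^h\bar u-\bar u}{h}+\bar g\|_{\Y^C}\le\eta(h)\to0$, so $\bar u\in D(G)$ and $G\bar u=-\bar g=-\bar{\cal G}\bar u$.

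I would then upgrade the inclusion to equality by a resolvent argument. Fix $\lambda_0$ with $\Re\lambda_0>\omega$; since $G$ generates a $C_0$-semigroup of type $\le\omega$, we have $\lambda_0\in\rho(G)$. The operator $A-\lambda_0$ generates the exponentially stable semigroup $e^{-\lambda_0 t}T(t)$, so for each $g\in BUC_C(\R^+,\X)$ the function $u(t):=\int_0^te^{-\lambda_0(t-s)}T(t-s)g(s)\,ds$ lies in $BUC_C(\R^+,\X)$ and, by the standard bounded-perturbation identity for mild solutions, satisfies $\cal Gu=g-\lambda_0u$, i.e. $(\lambda_0+\cal G)u=g$. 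Passing to the quotient shows $\lambda_0+\bar{\cal G}$ is surjective on $\Y^C$. Combined with $-\bar{\cal G}\subseteq G$ and $\lambda_0\in\rho(G)$ this forces equality: injectivity of $\lambda_0-G$ makes $\lambda_0+\bar{\cal G}$ injective, and for any $\bar v\in D(G)$, choosing $\bar w\in D(\bar{\cal G})$ with $(\lambda_0+\bar{\cal G})\bar w=(\lambda_0-G)\bar v$ yields $\bar w=\bar v$ by injectivity, whence $D(G)\subseteq D(\bar{\cal G})$ and $G=-\bar{\cal G}$.

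The step I expect to be the main obstacle is the generator identification, and within it the handling of the boundary term near $t=0$: one must recognize that the naive difference quotient fails to converge in $BUC_C(\R^+,\X)$ and that passing to $\Y^C$ is precisely what discards the compactly supported boundary contribution. The remaining ingredients — the bounded-perturbation identity giving surjectivity of $\lambda_0+\bar{\cal G}$, and the precompactness of the range of the perturbed convolution — are routine given the properties already established for $\cal G$ on $BUC_C(\R^+,\X)$.
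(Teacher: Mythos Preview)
Your proposal is sound. The paper itself does not prove this lemma but merely quotes it from \cite[Lemma~3.8]{luoloiminmat}, so there is no in-text argument to compare yours against; you are supplying what the citation stands in for. The overall strategy --- verifying the semigroup law by case analysis on $t$, obtaining invariance of $BUC_C(\R^+,\X)$ and strong continuity from the uniform continuity of $(t,x)\mapsto T(t)x$ on $[0,1]\times K$, then identifying the generator by first proving the inclusion $-\bar{\cal G}\subseteq G$ via the difference quotient and closing with a resolvent argument --- is correct. Your diagnosis of the boundary contribution near $t=0$ as the reason the difference quotient converges only after passing to $\Y^C$ is exactly the point: the quotient norm of $\bar\phi$ is $\limsup_{t\to\infty}\|\phi(t)\|$, so the discrepancy supported on $[0,h]$ is annihilated. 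The surjectivity step via the bounded-perturbation identity ${\cal G}u=g-\lambda_0 u$ for $u(t)=\int_0^t e^{-\lambda_0(t-s)}T(t-s)g(s)\,ds$ is standard and holds; the only detail you leave implicit is that this $u$ has precompact range, which follows by approximating the convolution by its truncations $\int_0^{\min(t,n)}$ and invoking Mazur's theorem on the closed convex hull of the compact set $\{e^{-\lambda_0\tau}T(\tau)x:\tau\in[0,n],\,x\in K\}$. One cosmetic point: the type of $(\bar T^h)_{h\ge 0}$ is $\max(\omega,0)$ rather than $\omega$, so you should take $\Re\lambda_0>\max(\omega,0)$ to guarantee $\lambda_0\in\rho(G)$.
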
 
\begin{corollary}
	With the above notations we have
	\begin{equation}\label{3.12}
		\overline{-\tilde{\cal D}+\tilde{\cal A} } = - \tilde {\cal G} .
	\end{equation}
\end{corollary}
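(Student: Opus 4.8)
The plan is to start from the inclusion established just before the corollary, $\overline{\tilde{\cal D}-\tilde{\cal A}}\subset\tilde{\cal G}$, which after multiplication by $-1$ reads $\overline{-\tilde{\cal D}+\tilde{\cal A}}\subset -\tilde{\cal G}$. Both operators are closed in $\Y^C$. I would then invoke the standard fact that if $B\subset C$ are two closed operators sharing a common point $\lambda$ in their resolvent sets, then $B=C$: indeed, for $x\in D(C)$ one solves $(\lambda-B)z=(\lambda-C)x$ with $z\in D(B)\subset D(C)$, and injectivity of $\lambda-C$ forces $z=x$, so $x\in D(B)$. Thus the whole proof reduces to exhibiting a single number $\lambda$, which I take to be a large positive real, lying simultaneously in $\rho(-\tilde{\cal G})$ and in $\rho(\overline{-\tilde{\cal D}+\tilde{\cal A}})$.

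For $-\tilde{\cal G}$ this is immediate from Lemma \ref{lem 3.9}: it generates the $C_0$-semigroup $(\bar T^h)_{h\ge 0}$ on $\Y^C$, and since $\|T^hf\|\le Me^{\omega h}\|f\|$ with the constants $M\ge 1,\omega\ge 0$ governing $(T(t))_{t\ge 0}$, the semigroup $(\bar T^h)$ has growth bound at most $\omega$. Hence every $\lambda$ with $\Re\lambda>\omega$ belongs to $\rho(-\tilde{\cal G})$.

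The essential step is to show that the same half-plane meets $\rho(\overline{-\tilde{\cal D}+\tilde{\cal A}})$, for which I would apply Theorem \ref{the 2.2}(ii) to the commuting pair $A:=\tilde{\cal A}$ and $B:=-\tilde{\cal D}$. That $\tilde{\cal D}$ and $\tilde{\cal A}$ commute in the sense of Definition \ref{def 2.2} follows because the pointwise multiplication structure of $\tilde{\cal A}$ commutes with the translation structure generating $\tilde{\cal D}$, whence their resolvents commute. To verify condition P, recall from Proposition \ref{pro 1} that the semigroup generated by $\tilde{\cal A}$ is analytic, so $\tilde{\cal A}$ is of class $\Sigma(\theta+\pi/2,R)$ for suitable $\theta\in(0,\pi/2)$ and $R>0$; meanwhile $\tilde{\cal D}$, and hence $-\tilde{\cal D}$, generates a group of isometries on $\Y^C$ (the quotient norm is the translation-invariant limit-superior norm), so that $\|R(\lambda,-\tilde{\cal D})\|\le 1/|\Re\lambda|$ and therefore $\|\lambda R(\lambda,-\tilde{\cal D})\|\le 1/\sin\theta'$ on any sector $\Sigma(\pi/2-\theta',R)$. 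Thus $-\tilde{\cal D}$ is of class $\Sigma(\pi/2-\theta',R)$ for every $\theta'\in(0,\pi/2)$, and choosing $\theta'<\theta$ secures condition P. Since $D(\tilde{\cal A})$ is dense, Theorem \ref{the 2.2}(ii) identifies the $\tilde{\cal A}$-closure with the ordinary closure and gives
$$
\sigma(\overline{-\tilde{\cal D}+\tilde{\cal A}})\subset\sigma(\tilde{\cal A})+\sigma(-\tilde{\cal D})\subset\sigma(A)+i\R\subset\{\lambda:\Re\lambda\le\omega\},
$$
where I used $\sigma(\tilde{\cal A})\subset\sigma(A)\subset\{\Re\lambda\le\omega\}$ and $\sigma(-\tilde{\cal D})\subset i\R$.

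Combining the two estimates, any real $\lambda>\omega$ lies in $\rho(-\tilde{\cal G})\cap\rho(\overline{-\tilde{\cal D}+\tilde{\cal A}})$, and the restriction principle of the first paragraph yields the equality $\overline{-\tilde{\cal D}+\tilde{\cal A}}=-\tilde{\cal G}$. I expect the main obstacle to be the clean verification of condition P — in particular the sectorial resolvent bound for $-\tilde{\cal D}$ and the matching of the angles $\theta'<\theta$ — together with justifying that the $\tilde{\cal A}$-closure furnished by Theorem \ref{the 2.2} is genuinely the operator $\overline{-\tilde{\cal D}+\tilde{\cal A}}$ appearing in the statement.
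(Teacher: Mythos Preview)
Your argument is correct, but it takes a different route from the paper. The paper observes that on $\Y^C$ the evolution semigroup factorises as $\tilde T^h=\tilde S(-h)\tilde T(h)$, where $(\tilde S(-h))_{h\ge 0}$ (the backward translation group, generator $-\tilde{\cal D}$) and $(\tilde T(h))_{h\ge 0}$ (the multiplication semigroup, generator $\tilde{\cal A}$) commute; it then invokes Lemma~\ref{lem comm} on the generator of a product of commuting $C_0$-semigroups to conclude directly that the generator of $(\tilde T^h)$ is $\overline{-\tilde{\cal D}+\tilde{\cal A}}$, which equals $-\tilde{\cal G}$ by Lemma~\ref{lem 3.9}. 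Your approach instead combines the inclusion $\overline{-\tilde{\cal D}+\tilde{\cal A}}\subset -\tilde{\cal G}$ with the ``common resolvent point forces equality'' principle, manufacturing the needed resolvent point for the closure via Theorem~\ref{the 2.2}(ii). Both are valid; the paper's route is shorter and uses only the $C_0$-structure, whereas yours genuinely consumes the analyticity of $(T(t))$ (to place $\tilde{\cal A}$ in the sectorial class required by condition~P). Your verification of condition~P and of the density of $D(\tilde{\cal A})$ is fine, so the identification of the $\tilde{\cal A}$-closure with the ordinary closure goes through; the concerns you flag at the end are not real obstacles here.
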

\begin{proof}
	The inclusion
	$$
	\overline{\tilde{\cal D}-\tilde{\cal A} } \subset  \tilde {\cal G} 
	$$
	is clear from Lemma \ref{lem 3.6}. By using the formula for the generator of the product of two commuting semigroups (see e.g.  \cite[p.24]{nag}) and Lemma \ref{lem 3.9} this inclusion turns out to be an equality. In fact, as is well known (see e.g. \cite{arebat2}) the induced translation semigroup $\tilde S(t)$ in $\Y^C$ is extendable to a group of isometries. It is easy to see that $\tilde T^h =\tilde S(-h)\tilde T(h)$ and $\tilde S(-h)\tilde T(h) = \tilde S(-h)\tilde T(h)$. Therefore, by Lemma \ref{lem comm}, (\ref{3.12}) is valid because $-\tilde {\cal G}$ is the generator of $\tilde T^h$, and $-\tilde {\cal D}$, $\tilde {\cal A}$ are the generators of $\tilde S(-h), \tilde T(h)$, respectively.
\end{proof}

Before we proceed, let us denote $\sigma_i(A):= \{ \xi \in \R: \ i\xi \in \sigma (A)\}$. 
\begin{lemma}\label{lem 3.12}
	Let $u$ be a bounded asymptotic mild solution of Eq.(\ref{eq}). Assume further that $u$ and $f$ have precompact ranges. Then,
	\begin{equation}
	sp(f) \subset 	sp(u) \subset \sigma_i(A)\cup sp(f).
	\end{equation}
\end{lemma}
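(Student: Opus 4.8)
The plan is to work in the quotient space $\Y^C$ and to recast the hypothesis as a single operator equation. Since $u$ is an asymptotic mild solution of Eq.(\ref{eq}) and both $u,f$ have precompact range, one has $\bar u\in D(\tilde{\cal G})$ and $\tilde{\cal G}\bar u=\bar f$ in $\Y^C$, where, by the Corollary following Lemma \ref{lem 3.9}, $\tilde{\cal G}=\overline{\tilde{\cal D}-\tilde{\cal A}}$ and $-\tilde{\cal G}$ generates the induced evolution semigroup $(\tilde T^h)_{h\ge 0}=(\tilde S(-h)\tilde T(h))_{h\ge 0}$. Throughout I would use two facts recorded earlier: by Lemma \ref{lem 2.3}, $sp(g)=\sigma(\tilde{\cal D}|_{\cal M_g})$ for every $g$; and by Lemma \ref{lem 3.1}, $\Y^C_\Lambda=\{\bar x\in\Y^C:\ sp(x)\subset\Lambda\}$ is a closed, translation-invariant subspace for each closed $\Lambda\subset\R$.

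For the inclusion $sp(f)\subset sp(u)$, I would approximate $\bar f$ by bounded operators applied to $\bar u$. Set $B_h:=h^{-1}(I-\tilde T^h)\in{\cal L}(\Y^C)$. Because $\tilde T^h=\tilde S(-h)\tilde T(h)$ commutes with the translation group $(\bar S(t))_{t\in\R}$, so does each $B_h$, and hence $B_h$ commutes with $\tilde{\cal D}$ and with its resolvent. Consequently $\widehat{B_h\bar u}(\lambda)=R(\lambda,\tilde{\cal D})B_h\bar u=B_hR(\lambda,\tilde{\cal D})\bar u=B_h\hat u(\lambda)$, which is analytic wherever $\hat u$ is; therefore $sp(B_h\bar u)\subset sp(u)$, i.e. $B_h\bar u\in\Y^C_{sp(u)}$. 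Since $\bar u\in D(\tilde{\cal G})$ and $-\tilde{\cal G}$ generates $(\tilde T^h)_{h\ge 0}$, we have $B_h\bar u\to\tilde{\cal G}\bar u=\bar f$ as $h\to 0^+$, and the closedness of $\Y^C_{sp(u)}$ forces $\bar f\in\Y^C_{sp(u)}$, that is, $sp(f)\subset sp(u)$.

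For the inclusion $sp(u)\subset\sigma_i(A)\cup sp(f)$, I would fix $\xi_0\notin\sigma_i(A)\cup sp(f)$ and show that $\hat u$ extends analytically across $i\xi_0$. Since $i\xi_0\in\rho(A)$, the pointwise resolvent representation from Proposition \ref{pro 1} gives $i\xi_0\in\rho(\tilde{\cal A})$, so $R(\cdot,\tilde{\cal A})$ is analytic on a neighborhood $U$ of $i\xi_0$. Choosing $\bar u_n\in D(\tilde{\cal D})\cap D(\tilde{\cal A})$ with $\bar u_n\to\bar u$ and $(\tilde{\cal D}-\tilde{\cal A})\bar u_n\to\bar f$ (possible since $\tilde{\cal G}=\overline{\tilde{\cal D}-\tilde{\cal A}}$), a computation with the commuting resolvents $R(\lambda,\tilde{\cal D})$ and $R(\lambda,\tilde{\cal A})$ based on $(\tilde{\cal D}-\tilde{\cal A})=(\tilde{\cal D}-\lambda)+(\lambda-\tilde{\cal A})$ yields, after passing to the limit, the identity
\[
\hat u(\lambda)=R(\lambda,\tilde{\cal A})\bar u+R(\lambda,\tilde{\cal A})\,\hat f(\lambda),\qquad \Re\lambda\ne 0 .
\]
On $U$ the first term is analytic, and since $\xi_0\notin sp(f)$ the factor $\hat f$ extends analytically across $i\xi_0$ while $R(\cdot,\tilde{\cal A})$ is analytic there; as the resolvents commute, the product is analytic on all of $U$. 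Hence $\hat u$ extends analytically across $i\xi_0$, so $\xi_0\notin sp(u)$, which gives the asserted inclusion.

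The step I expect to be the main obstacle is the bookkeeping of unbounded operators in the second inclusion: the equation $\tilde{\cal G}\bar u=\bar f$ only places $\bar u$ in $D(\overline{\tilde{\cal D}-\tilde{\cal A}})$, not in $D(\tilde{\cal D})\cap D(\tilde{\cal A})$, so the resolvent identity must first be established on the approximants $\bar u_n$ and then transferred to the limit, using boundedness of the resolvents and the commutation $R(\lambda,\tilde{\cal D})R(\mu,\tilde{\cal A})=R(\mu,\tilde{\cal A})R(\lambda,\tilde{\cal D})$. A secondary point to verify with care is that the analytic extension of $\hat f$ near $i\xi_0$ combines with $R(\cdot,\tilde{\cal A})$ into a genuinely analytic $\Y^C$-valued function on the whole neighborhood $U$, including the segment $U\cap i\R$.
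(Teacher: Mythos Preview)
Your argument is correct and, for the inclusion $sp(f)\subset sp(u)$, coincides with the paper's: both use that $-\tilde{\cal G}$ generates $(\tilde T^h)_{h\ge 0}$, form the difference quotients $h^{-1}(\tilde T^h-I)\bar u$, observe that these lie in $\Y^C_{sp(u)}$ because $\tilde T^h$ commutes with translations, and pass to the limit $-\bar f$ inside the closed subspace $\Y^C_{sp(u)}$.

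For the inclusion $sp(u)\subset\sigma_i(A)\cup sp(f)$ the paper gives no argument at all and simply refers to \cite{luomin}; you instead supply an explicit proof via the resolvent identity
\[
\hat u(\lambda)=R(\lambda,\tilde{\cal A})\bar u+R(\lambda,\tilde{\cal A})\hat f(\lambda),
\]
obtained on approximants $\bar u_n\in D(\tilde{\cal D})\cap D(\tilde{\cal A})$ from $(\tilde{\cal D}-\tilde{\cal A})=(\tilde{\cal D}-\lambda)+(\lambda-\tilde{\cal A})$ and then passed to the limit. This is the standard computation underlying the cited result, and your self-contained treatment is a genuine addition over what the paper writes. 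One small correction: the displayed identity holds for $\lambda\in\rho(\tilde{\cal D})\cap\rho(\tilde{\cal A})$, not for all $\Re\lambda\ne 0$; since you only use it on the punctured neighborhood $U\setminus i\R$ with $U\subset\rho(\tilde{\cal A})$, this does not affect the conclusion.
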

\begin{proof}
The inclusion $sp(u) \subset \sigma_i(A)\cup sp(f)$ was proved in \cite{luomin}. For the inclusion $sp(f) \subset sp(u)$, we note that if $u$ is an asymptotic mild solution, then $\tilde u \in D(\tilde{\cal G})$, and $-\tilde{\cal G}u=f$. Further as 
$-\tilde{\cal G}$ is the generator of the evoltion semigroup $ \tilde T^h$, 
\begin{align*}
	\lim_{h\downarrow 0} \frac{\tilde T^h\tilde u-\tilde u}{h} 
	&= 	\lim_{h\downarrow 0} \frac{\tilde T^h\tilde u-\tilde u}{h}\\
	&=-\tilde{\cal G }u=-\tilde f .
\end{align*}
Since $\tilde T^h$ is the composition of a multiplication by $\tilde T(t)$ operator and a translation, we have $sp (T^hu)\subset sp(u)$. Next, for each $h>0$
$$
sp\left(\frac{\tilde T^h\tilde u-\tilde u}{h} \right) \subset sp(u).
$$
This yields 
$$
sp(f) = sp\left(\lim_{h\downarrow 0} \frac{\tilde T^h\tilde u-\tilde u}{h} \right) \subset sp(u).
$$
\end{proof}
Below we present necessary and sufficient conditions for the existence of asymptotic mild solutions with specific structures of spectrum. 
\begin{corollary}\label{cor 3.12}
Let $f$ be a function in $BUC_C(\R^+,\X)$ and $\{ p\} \not \subset e^{i sp(f) }$. Then, there exists no asymptotic 1-periodic function in $BUC_C(\R^+,\X)$ that is an asymptotic mild solution Eq.(\ref{eq}).
\end{corollary}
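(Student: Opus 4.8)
The plan is to argue by contraposition, with the whole weight of the argument resting on the inclusion $sp(f)\subset sp(u)$ furnished by Lemma \ref{lem 3.12}, combined with the spectral description of asymptotic periodicity in Theorem \ref{the 3.8}(iv). I would suppose that, contrary to the claim, some $u\in BUC_C(\R^+,\X)$ is simultaneously asymptotic $1$-periodic and an asymptotic mild solution of Eq.(\ref{eq}), and then derive that the spectral profile $e^{i sp(f)}$ is forced to collapse onto the single point attached to a $1$-periodic function, which contradicts the hypothesis.

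First I would check that Lemma \ref{lem 3.12} is applicable. Membership in $BUC_C(\R^+,\X)$ is exactly the precompact-range requirement, so both $f$ and the hypothetical $u$ have precompact range, and $u$ is a bounded asymptotic mild solution by assumption. Lemma \ref{lem 3.12} then yields $sp(f)\subset sp(u)$, which is the decisive one-directional transfer of spectral information from the forcing term to the solution.

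Next I would feed in the periodicity of $u$. By Theorem \ref{the 3.8}(iv) applied with $p=0$, the asymptotic $1$-periodicity of $u$ is equivalent to $sp(u)\subset 2\pi\Z$, i.e. $\sigma(u)=\overline{e^{i sp(u)}}\subset\{1\}$. Monotonicity of $\xi\mapsto e^{i\xi}$ under inclusion, together with $sp(f)\subset sp(u)$, then gives $e^{i sp(f)}\subset e^{i sp(u)}\subset\{1\}$, so $f$ itself would have to be asymptotic $1$-periodic in the spectral sense. This is precisely what the hypothesis $\{p\}\not\subset e^{i sp(f)}$ (read as the statement that the profile $e^{i sp(f)}$ is not contained in the single point associated with a $1$-periodic function) forbids, and the contradiction completes the proof.

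I do not expect a genuine obstacle here: the only point deserving care is the verification that the candidate solution lies in the precompact-range class $BUC_C(\R^+,\X)$, which is what legitimizes the use of Lemma \ref{lem 3.12}; after that the statement is a short chain of set inclusions and a single appeal to the weak spectral mapping already embedded in Theorem \ref{the 3.8}(iv).
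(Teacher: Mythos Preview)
Your argument is correct and is exactly the route the paper takes: its one-line proof (``obvious in view of Lemma \ref{lem 3.12} and Theorem \ref{the 3.8}'') unpacks precisely into the chain $sp(f)\subset sp(u)\subset 2\pi\Z$ that you spell out, and the worked Example at the end of the paper confirms this is the intended mechanism. Your parenthetical reading of the hypothesis as $e^{i\,sp(f)}\not\subset\{1\}$ is also the only one under which the statement and proof cohere; the printed form $\{p\}\not\subset e^{i\,sp(f)}$ appears to be a typo.
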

\begin{proof}
The proof is obvious in view of Lemma \ref{lem 3.12} and Theorem \ref{the 3.8}.
\end{proof}
\begin{theorem}\label{the main2}
Assume that $u$ is an asymptotic mild solution of (\ref{eq}). Moreover, assume that
	$sp(f) \cap \overline{ \sigma_i(A)\backslash sp(f)} =\emptyset$ and one of the sets  $\overline{ \sigma_i(A)\backslash  sp(f)} $, $ \sigma_i(A)=\emptyset$ is bounded (so is compact).	
	Then, there exists an asymptotic mild solution $w(\cdot )$ of Eq.(\ref{eq}) with 
	$$
	sp(w) \subset sp(f)
	$$
	provided that there is an asymptotic mild solution $u(\cdot )\in BUC_C(\R^+,\X)$ of Eq.(\ref{eq}).
\end{theorem}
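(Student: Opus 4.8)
The plan is to split the spectrum of the given solution $u$ into the part sitting in $sp(f)$ and the part sitting in $\overline{\sigma_i(A)\setminus sp(f)}$, and to retain only the first piece. First I would record, via Lemma \ref{lem 3.12} and its surrounding discussion, that $u$ being an asymptotic mild solution means $\tilde u\in D(\tilde{\cal G})$ with $-\tilde{\cal G}\tilde u=\tilde f$, and that
$$
sp(f)\subset sp(u)\subset \sigma_i(A)\cup sp(f)\subset \Lambda_1\cup\Lambda_2,
$$
where $\Lambda_1:=sp(f)$ and $\Lambda_2:=\overline{\sigma_i(A)\setminus sp(f)}$. By hypothesis $\Lambda_1\cap\Lambda_2=\emptyset$; since both are closed and one of them is compact, $\mathrm{dist}(\Lambda_1,\Lambda_2)>0$. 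The degenerate case $\Lambda_2=\emptyset$ is trivial (then $sp(u)\subset sp(f)$ and $w=u$ works), so I may assume both pieces are genuinely present.

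The key step is to manufacture a bounded projection $P$ on $\Y^C$ that commutes with $\tilde{\cal G}$ and splits off the $\Lambda_1$-part of the spectrum. Let $K$ denote whichever of $\Lambda_1,\Lambda_2$ is compact, and choose $\psi\in L^1(\R)$ whose Fourier transform $\bar{\cal F}\psi$ is $\equiv 1$ on a neighborhood of $K\cap sp(u)$ and vanishes on a neighborhood of the complementary (closed) piece; this is possible precisely because the two pieces are separated and one is compact, so one may take $\bar{\cal F}\psi$ smooth with compact support inside a small neighborhood of $K$. Set $Q:=\psi(\bar S)$, i.e. $h\mapsto \int_{-\infty}^{+\infty}\psi(t)\bar S(t)h\,dt$ on $\Y^C$. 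A direct evaluation shows $\bar S(t)\tilde T(h)=\tilde T(h)\bar S(t)$, so $Q$ commutes with both $\tilde S(t)$ and the multiplication semigroup $\tilde T(h)$, hence with $\tilde T^h=\tilde S(-h)\tilde T(h)$ and therefore with its generator $-\tilde{\cal G}$. On the closed $\bar S$-invariant subspace generated by $\tilde u$ and $\tilde f$ (all of whose elements have spectrum in $sp(u)\subset\Lambda_1\cup\Lambda_2$) the relation $\bar{\cal F}(\psi*\psi)=(\bar{\cal F}\psi)^2=\bar{\cal F}\psi$ on that spectrum makes $Q$ idempotent, and the standard Arveson estimate $sp(\psi(\bar S)h)\subset \mathrm{supp}(\bar{\cal F}\psi)\cap sp(h)$ exhibits $Q$ as the spectral projection onto the $K$-part. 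I then take $P:=Q$ if $K=\Lambda_1$ and $P:=I-Q$ if $K=\Lambda_2$, so that in either case $P$ projects onto the $\Lambda_1=sp(f)$ part.

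Finally I would verify the two things needed. For the spectrum, $sp(P\tilde u)\subset \Lambda_1\cap sp(u)=sp(f)$: when $P=Q$ this is the support estimate; when $P=I-Q$ it follows by testing against $\eta\in L^1$ with $\bar{\cal F}\eta$ supported where $\bar{\cal F}\psi\equiv 1$, which gives $\eta(\bar S)(I-Q)\tilde u=\eta(\bar S)\tilde u-(\eta*\psi)(\bar S)\tilde u=0$ and hence no spectrum on $\Lambda_2$. For the equation, since $\bar{\cal F}\psi\equiv 1$ (resp. $\equiv 0$) on $sp(f)\subset\Lambda_1\cap sp(u)$ one gets $P\tilde f=\tilde f$; combined with the commutation,
$$
-\tilde{\cal G}(P\tilde u)=P(-\tilde{\cal G}\tilde u)=P\tilde f=\tilde f,
$$
so any $BUC_C(\R^+,\X)$-representative $w$ of $P\tilde u$ is an asymptotic mild solution of Eq.(\ref{eq}) with $sp(w)\subset sp(f)$.

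I expect the main obstacle to be the construction and justification of $P$: showing that $\psi(\bar S)$ is genuinely idempotent on the orbit of $\tilde u$ and $\tilde f$, that it commutes with the evolution semigroup $\tilde T^h$ (not merely with translations), and that the separation hypothesis $sp(f)\cap\overline{\sigma_i(A)\setminus sp(f)}=\emptyset$ together with compactness of one piece really produces a multiplier $\bar{\cal F}\psi$ with the required locally constant behaviour. The analyticity of the semigroup and the sum-of-commuting-operators machinery enter only through Lemma \ref{lem 3.12}; the heart of this Massera-type statement is the spectral splitting just described.
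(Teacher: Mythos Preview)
Your argument is correct and follows the same overall strategy as the paper: use Lemma~\ref{lem 3.12} to place $sp(u)$ inside the disjoint union $sp(f)\cup\overline{\sigma_i(A)\setminus sp(f)}$, build a spectral projection for $\tilde{\cal D}$ separating these two pieces, check that this projection commutes with the evolution semigroup $\tilde T^h$ (hence with $\tilde{\cal G}$), and conclude that the $sp(f)$-component of $\tilde u$ still solves $-\tilde{\cal G}\,\cdot=\tilde f$.

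The one genuine difference is in how the projection is manufactured. The paper uses the holomorphic functional calculus: it restricts $\tilde{\cal D}$ to the orbit space $\mathcal M_u$, observes that its spectrum splits into two separated closed pieces one of which is compact, and takes the Riesz projection $P_c=\frac{1}{2\pi i}\int_\gamma R(\lambda,\tilde{\cal D})\,d\lambda$; commutation with $\tilde T(t)$ is read off from the commutation of $R(\lambda,\tilde{\cal D})$ with $\tilde T(t)$. You instead use the $L^1$-functional calculus for the isometry group $\bar S$, taking $Q=\psi(\bar S)$ with $\bar{\cal F}\psi$ locally equal to $1$ and $0$ on the two pieces; commutation with $\tilde T(h)$ then comes from $\bar S(t)\tilde T(h)=\tilde T(h)\bar S(t)$. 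Both constructions yield the same spectral projection on $\mathcal M_u$. The Riesz contour makes the compactness hypothesis visibly indispensable and stays entirely within standard spectral theory; your multiplier construction ties more directly to the Arveson description of $sp(\cdot)$ used throughout the paper, makes the inclusion $sp(Qh)\subset\mathrm{supp}(\bar{\cal F}\psi)\cap sp(h)$ immediate, and handles the two cases ($K=\Lambda_1$ or $K=\Lambda_2$) uniformly via $P=Q$ or $P=I-Q$.
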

\begin{proof} Without loss of generality we may assume that $\overline{ \sigma_i(A)\backslash  sp(f)} $ is compact.
By Lemmas \ref{lem 2.3} and \ref{lem 3.12}, and the spectral decomposition \cite[Proposition 1.16, p. 245]{engnag} if $g\in BUC_C(\R^+,\X)$ such that $sp(g) \subset \sigma_i(A)\cup sp(f)$  we have  decomposition
\begin{equation}\label{split}
	\cal M_g =\cal M_1 \oplus \cal M_2 ,
\end{equation}
where 
$\cal M_1 $ is the spectral space corresponding to the spectral set $\overline{ \sigma_i(A)\backslash  sp(f)} $ (that is compact)
and the Riesz projection
$$
P_c :=\frac{1}{2\pi i}\int_{\gamma }R(\lambda , \tilde{\cal D} )d \lambda  
$$
is a bounded linear operator that commutes with $R(\lambda,A)$ and
\begin{align}
	\sigma (\tilde{\cal D}_1)&\subset  \overline{\sigma_i(A)\backslash sp(f)},\\
	\sigma (\tilde{\cal D}_2)&\subset sp(f).	
\end{align}
Let us define $\tilde u_1:= P_c u$ and $\tilde u_2:= (I-P_c)u$. Then, it is easily seen that $sp( \tilde u_1)\subset sp(f)$ and $sp(\tilde u_2) \subset \overline{ \sigma_i(A)\backslash sp(f)}$. It is also seen that as $R(\lambda ,\tilde {\cal D})$ commutes  with the semigroup $\tilde T(t)$, $P_c$ commutes with  the semigroup $\tilde T(t)$ as well. This means,  the semigroup $\tilde T(t)$ leaves the decomposition (\ref{split}) invariant. Without loss of the generality we may assume that $sp(f)$ is compact. Next, as $u$ is an asymptotic mild solution of Eq.(\ref{eq}), $\tilde u \in D(\tilde{\cal G})$, so
\begin{align*}
	\lim_{h\downarrow 0} \frac{T^hP_c\tilde u-P_c\tilde u}{h} 
  &= 	P_c\lim_{h\downarrow 0} \frac{T^h\tilde u-\tilde u}{h}\\
  &=-P_c \tilde f.
\end{align*}
This shows that $\tilde u_1=P_c\tilde u \in D(-\tilde{\cal G})$ and $ -\tilde{\cal G}P_c\tilde u =\tilde f$ . That means, $P_cu$ is an asymptotic mild solution of Eq.(\ref{eq}). The case $\overline{\sigma_i(A)\backslash sp(f)}$ is compact can be treated in the same lines.
\end{proof}

\bigskip\noindent
\begin{remark}
\begin{enumerate}
	\item In view of the failure of the Spectral Mapping Theorem for general $C_0$-semigroups
	the condition in the assertion (i) is a little more general than that formulated in
	terms of $\sigma (T(1))$. 
	\item If we know beforehand that $u$ is almost periodic, then in the statement of Theorem 3.5 we
	can claim that the spectral component $w$ is almost periodic.
\end{enumerate}
\end{remark}

\section{Examples}
\begin{example}
	The function $g(t)=\sin (\sqrt{t})$ is an asymptotic $1$-periodic function, but it cannot be expressed as the sum of an almost periodic function and a decaying function, that is, there are no two functions $a(\cdot )\in AP(\R,\X)$ and $c(\cdot )\in C_0(\R^+,\X)$ such that
	$$
	g(t)=a(t)+c(t), \ t\ge 0.
	$$
\end{example}
In \cite{luoloiminmat} it is proved that $g$ cannot be expressed as the sum of a $1$-periodic function and a decaying function. We can show easily that this function $g$ is asymptotic $\omega$-periodic for any $\omega >0$, so by the spectral theory in the previous section $sp(g) \subset 2\omega \pi \Z$ for any $\omega>0$. This yields that $sp(g) \subset \{0	\}.$
\begin{example}
	Consider the equation equation
	\begin{equation}
		u'(t) =Au(t)+f(t), t\ge 0,
	\end{equation}
	in a Banach space $\X$ where $A$ is the generator of analytic $C_0$-semigroup and $f$ is a function with precompact range. Then, as is well known (see e.g. \cite[Corollary 3.7]{paz}) $\sigma_i(A)$ should be bounded. Therefore, $\overline{\sigma_i\backslash sp(f)}$ must be compact. To apply Theorem \ref{the main2} we just need to require that $\overline{\sigma_i\backslash sp(f)} \cap sp(f) =\emptyset .$
\end{example}

\begin{example}\label{exa 2}
\end{example}
Consider the equation
\begin{equation}
	\begin{cases}
		w_t(x,t)=w_{xx}(x,t)+aw(x,t) + g(x)\sin(\sqrt{t}), \\
		\hspace{5cm}0\le s \le \pi ,\ t\ge 0, \cr
		w(0,t)=w(\pi ,t)=0 , \ \forall t > 0,
	\end{cases}
\end{equation}
where $1\ge a>0$ is given and $w(x,t), g(x)$ are scalar-valued functions, $g\in L^2[0,\pi ]$. We define the space 
${\X}:=L^2[0,\pi ]$ and $A_T: {\X}\to {\X}$ by the formula
\begin{equation}\label{exa 3}
	\begin{cases}\hspace{.5cm}
		A=y''-ay, \cr
		D(A)=\{ y\in {\X}:\ \mbox{y, y' are absolutely continuous}, \ y''\in {\X},\cr
		\hspace{4cm} y(0)=y(\pi )=0\}.
	\end{cases}
\end{equation}
The evolution equation we are concerned with
in this case is the following
\begin{equation}\label{ex}
	{\frac{dx(t)}{dt}}=Ax(t)+ f(t), \ x(t) \in {\bf X},
\end{equation}
where $A$ is the infinitesimal generator of an analytic and compact semigroup $(T(t))_{t\ge 0}$ in
${\X}$ (see \cite[p. 414]{traweb}) and $f(t):=\sin(\sqrt{t}) g(\cdot)$. Moreover, the eigenvalues of $A$ are
$-n^2, n=1,2,...$ and the set $\sigma_i(A )$ is determined from the set of
imaginary solutions of the equations
\begin{equation}\label{exa char}
	\lambda -a=-n^2, \ n=1,2,...\ .
\end{equation}
This yields that $\sigma_i(A)=\{ \sqrt{n^2-a}, n=1,2,\cdots \}$. 
\begin{enumerate}
	\item If $1>a>0$, then $sp(f) \cap \sigma_i(A)=\emptyset$. Theorem \ref{the main1} says that Eq.(\ref{ex}) has an asymptotic mild solution $u$ with $sp(u)\subset \{0\}$.
	\item If $a=1$, then, $sp(f)\cap \sigma_i(A)=\{0\}$. Theorem \ref{the main2} says that if Eq.(\ref{ex}) has an asymptotic mild solution $u\in BUC_C(\R,\X)$, then it has an asymptotic mild solution $w$ with
	$sp(w)\subset \{0\}$.
\end{enumerate}

\begin{example}
\end{example}
Consider the equation
\begin{equation}\label{exa 3}
	\begin{cases}
		w_t(x,t)=w_{xx}(x,t)+aw(x,t) + g(x) \cos(t), \\
		\hspace{5cm}0\le s \le \pi ,\ t\ge 0, \cr
		w(0,t)=w(\pi ,t)=0 , \ \forall t > 0,
	\end{cases}
\end{equation}
where $a$, $g$ satisfy the same conditions as in Example \ref{exa 2}. Further, we assume that $g(\cdot)\not= 0 \in L^2([0,\pi])$. In this case setting $f(t):= g(\cdot )\cos(t)$, for $t\ge 0$, we can show easily that $sp(f)=\{1\}$. By Corollary \ref{cor 3.12} we can claim that Eq.(\ref{ex}) does not have mild solution $u\in BUC_C(\R^+,\X)$ with $\lim_{t\to\infty} (u(t+1)-u(t))=0$. In fact, such a solution $u$, if exists, must satisfy $\{1\} \subset sp(u) \subset \{ 2\pi\Z\}$, so $1\in 2\pi\Z$, a contradiction.

\bibliographystyle{amsplain}

\end{document}